\theoremstyle{plain}
\newtheorem {lemma}{Lemma}
\newtheorem {proposition}[lemma]{Proposition}
\newtheorem {theorem}[lemma]{Theorem}
\newtheorem {corollary}[lemma]{Corollary}
\theoremstyle{definition}
\newtheorem{definition}[lemma]{Definition}
\newtheorem{remark}[lemma]{Remark}
\newtheorem {example}[lemma]{Example}
\newcommand{\N}{\mathbb{N}}
\newcommand{\X}{\langle X\rangle}
\newcommand{\SN}{\operatorname{SN}}
\def\Oo{\mathcal{O}}
\newcommand{\irr}{\operatorname{irr}}
\newcommand{\NF}{\operatorname{NF}}
\newcommand{\st}{\operatorname{st}}
\newcommand{\ann}{\operatorname{ann}}
\newcommand{\supp}{\operatorname{supp}}
\newcommand{\bemph}[1]{{\upshape#1}} 
\newcommand{\ep}[1]{\bemph{(}#1\bemph{)}} 
\title[Normal forms for weighted
Leavitt path algebras]{Applications of normal forms for weighted
Leavitt path algebras: simple rings and domains}
\author{Roozbeh Hazrat}
\address{Centre for Research in Mathematics,
Western Sydney University,
Australia}\email{r.hazrat@westernsydney.edu.au}
\author{Raimund Preusser}
\address{Department of Mathematics,
University of Brasilia, Brazil}
\email{raimund.preusser@gmx.de}
\date{}
\thanks{The first author would like to acknowledge Australian Research Council grants DP150101598 and DP160101481. A part of this work was done at the University of M\"unster, where the first author was a Humboldt Fellow.}
\subjclass[2000]{16S10, 16W10, 16W50, 16D70} 
\keywords{Weighted Leavitt path algebra, diamond lemma, simple ring, prime ring, nonsingular ring}
\begin{document}
\maketitle

\begin{abstract} Weighted Leavitt path algebras (wLpas) are a generalisation of Leavitt path algebras (with graphs of weight $1$) and cover the algebras $L_K(n,n+k)$ constructed by Leavitt. Using Bergman's diamond lemma, we give normal forms for elements of a weighted Leavitt path algebra. This allows us to produce a basis for a wLpa. 
Using the normal form we classify the wLpas which are domains, simple and graded simple rings. For a large class of weighted Leavitt path algebras we establish a local valuation and as a consequence we prove that these algebras are prime, semiprimitive and nonsingular but contrary to Leavitt path algebras, they are not graded von Neumann regular.
\end{abstract}


\section{Introduction}


In a series of papers William Leavitt studied algebras that are now denoted by $L_K(n,n+k)$ and have been coined Leavitt algebras. Let $X=(x_{ij})$ and $Y=(y_{ji})$ be $n\times (n+k)$ and $(n+k)\times n$ matrices consisting of symbols $x_{ij}$ and 
$y_{ji}$, respectively. Then for a field $K$, $L_K(n,n+k)$ is a $K$-algebra generated by all $x_{ij}$ and $y_{ji}$ subject to the relations $XY=I_{n+k}$ and $YX=I_n$. In~\cite[p.190]{vitt56} Leavitt studied these algebras for  $n=2$ and $k=1$, in~\cite[p.322]{vitt57} for any $n\geq 2 $ and $k=1$ and finally in~\cite[p.130]{vitt62}  for arbitrary $n$ and $k$. He established that these algebras are of type $(n,k)$. He further showed that   
$L_K(1,k+1)$ are simple rings and $L_K(n,n+k)$, $n\geq 2$ are domains. Recall that a ring $A$ is of type $(n,k)$ if $n$ and $k$ are the least positive integers such that $A^n\cong A^{n+k}$ as left $A$-modules.  He proved these statements by formulating a normal form for the elements of his algebras. This normal form was worked out more systematically by P.M. Cohn in~\cite{cohn66} who showed that $L_K(n,n+k)$ is a domain using a trace method. The normal forms for algebras defined by  generators and relations were streamlined by G. Bergman in his influential paper~\cite{bergman78}, called the diamond lemma, following the paper~\cite{newman42}. 
 
 Leavitt path algebras were introduced a decade ago~\cite{aap05,Ara_Moreno_Pardo}, associating a $K$-algebra to a directed graph. For a graph with one vertex and $k+1$ loops, it recovers the Leavitt algebra $L_K(1,k+1)$.  The definition and the development of the theory were  inspired on the one hand by Leavitt's construction of  $L_K(1,k+1)$ and on the other hand by Cuntz algebras $\Oo_n$~\cite{cuntz1} and  Cuntz-Krieger algebras in $C^*$-algebra theory~\cite{raeburn}.  The Cuntz algebras and later Cuntz-Krieger type $C^*$-algebras revolutionised $C^*$-theory, leading ultimately to the astounding
Kirchberg-Phillips classification theorem~\cite{phillips}. In the last decade the Leavitt path algebras have created the same type of stir in the algebraic community. The development of Leavitt path algebras and its interaction with graph $C^*$-algebras have been well-documented in several publications and we refer the reader to~\cite{abrams-ara-molina} and the references therein. 
 
 Since their introductions, there have been several attempts to introduce a generalisation of Leavitt path algebras which would cover the algebras $L_K(n,n+k)$ for any $n\geq 1$, as well. Ara and Goodearl's Leavitt path algebras of separated graphs were introduced in~\cite{aragoodearl} which gives $L_K(n,n+k)$ as a corner ring of some separated graphs. The  weighted Leavitt path algebras were introduced in~\cite{hazrat13} which gives $L_K(n,n+k)$ for a weighted graph with one vertex and $n+k$ loops of weight $n$. If the weights of all the edges are $1$ (i.e., the graph is unweighted), then the weighted Leavitt path algebras reduce to the usual Leavitt path algebras. The structure of weighted Leavitt path algebras remained to be explored. In this paper we take a step in this direction (no one had looked at the topic systematically so far).
 
 In Section \ref{sec2} we develop systematically a normal form for elements of weighted Leavitt path algebras by using Bergman's diamond machinery. This allows us to describe a basis for such algebras. In turn we can then characterise simple and graded simple weighted Leavitt path algebras (cf. Section \ref{sec3}). There are unexpected interesting cases. For example, for the weighted graphs $E$ and $F$  below with one edge of weight two and the rest of weight one, the weighted Leavitt path algebra $L(E,\omega)$ is simple, whereas $L(F,w)$ is not ($\mathbb Z^2$-graded) simple. 
\[
\xymatrix{
E:  \!\!\!\!\!\!\!\!&\!\!\!\! u  \ar@/^1.2pc/[r]^{\alpha_1,\alpha_2}  & v  \ar@/^1.2pc/[l]_{\beta}
& F:  \!\!\!\!\!\!\!\!&\!\!\!\! u  \ar@/^1.2pc/[r]^{\alpha_1,\alpha_2}  & v \ar@/^1.7pc/[l]^{\gamma}  \ar@/^1.2pc/[l]_{\beta}
}.
\] 
In Theorem~\ref{thmn1} we show that a simple weighted Leavitt path algebra is isomorphic to a Leavitt path algebra. In Section \ref{sec7} we construct a local valuation for a large class of weighted Leavitt path algebras (so-called LV-algebras). Using the valuation we show these algebras are prime, semiprimitive and nonsingular but contrary to Leavitt path algebras, they are not graded von Neumann regular. Further we classify the weighted Leavitt path algebras which are domains (see Theorem \ref{cor1}).
 This allows us to obtain a much larger class of prime and nonsingular rings than Leavitt path algebras.

We finish this introduction by mentioning that K. McClanahan \cite{mccal1,mccal2} studied $U^{nc}_{n,n+k}$-algebras (first considered by D.V. Voiculescu). These are $C^*$-algebras generated by elements $u_{ij}$, $1\leq i \leq n$, $1\leq j\leq n+k$ subject to the relations $uu^*=I_n$ and 
$u^* u= I_{n+k}$, where $u=(u_{ij})_{n\times (n+k)}$. Note that  the Cuntz algebra $\Oo_n$ corresponds to $U^{nc}_{1,n}$.  Clearly in the pure algebra setting, $U^{nc}_{n,n+k}$ corresponds to the Leavitt algebra $L_{\mathbb C}(n,n+k)$. However, the concept of weighted graph $C^*$-algebra which as a special case cover $U^{nc}_{n,n+k}$  is yet to be defined and explored. 
 
\section{Normal forms for weighted Leavitt path algebras}\label{sec2}

We begin this section by recalling the concept of weighted graphs and weighted Leavitt path algebras, first introduced in~\cite{hazrat13}. Throughout the semigroup of positive integers is denoted by $\mathbb N$ and the monoid of non-negative integers by $\mathbb N_0$.

\begin{definition}[{\sc Weighted graph}]\label{def1}
A {\it weighted graph} $E=(E^0,E^{\st},E^1, s, r, \omega)$ consists of three countable sets, $E^0$ called {\it vertices}, $E^{\st}$ {\it structured edges} and $E^1$ {\it edges}, maps $s, r: E^{\st}\rightarrow E^0$, and a {\it weight map} $\omega: E^{\st}\rightarrow \N$ such that
\[E^1 = \bigsqcup\limits_{\alpha\in E^{\st}}\{\alpha_i \ | \ 1 \leq i \leq \omega(\alpha)\},\]
i.e., for any $\alpha\in E^{\st}$, with $\omega(\alpha) = k$, there are $k$ distinct elements $\{\alpha_1, . . . , \alpha_k\}$, and $E^1$ is the disjoint union of all such sets for all $\alpha\in E^{\st}$.

\end{definition}

\begin{remark}
We sometimes write $(E,\omega)$ to emphasise the graph is weighted. If $s^{-1}(v)$ is
a finite set for every $v\in E^0$, then the graph is called {\it row-finite}. In this note we will consider only row-finite graphs. In this setting, if the number of vertices, i.e., $|E^0|$, is finite, then the number of edges, i.e., $|E^1|$, is finite as well
and we call $E$ a {\it finite graph}.
\end{remark}

\begin{definition} [{\sc Weighted Leavitt path algebra}]\label{def3}
Let $(E,\omega)$ denote a weighted graph and $R$ a unital ring. Set $X:=E^0\cup E^1\cup(E^1)^*$, where 
$(E^1)^*=\{\alpha_i^*\ | \ \alpha_i\in E^1\}$. The quotient $R\X/I$ of the free $R$-ring $R\X$ generated by $X$ and the ideal $I$ of $R\X$ generated by the relations
\begin{enumerate}[(1)]
\item $vw = \delta_{vw}v$ for every $v,w \in E^0$,
\medskip 

\item $s(\alpha)\alpha_i = \alpha_i r(\alpha) = \alpha_i$ and $r(\alpha)\alpha_i^*= \alpha_i^*s(\alpha) = \alpha_i$ for all $\alpha\in E^{\st}$ and $1\leq i \leq \omega(\alpha)$,

\medskip 

\item $\sum\limits_{\{\alpha\in E^{\st}, s(\alpha)=v\}}\alpha_i\alpha_j^*= \delta_{ij}v$ for all $v\in E^0$ and $1\leq i, j\leq \max\{\omega(\alpha) \ |  \ \alpha\in E^{\st}, s(\alpha) = v\}$,
\medskip

\item $\sum\limits_{1\leq i\leq \max\{\omega(\alpha),\omega(\beta)\}}\alpha_i^*\beta_i= \delta_{\alpha\beta}r(\alpha)$, for all $\alpha,\beta\in E^{\st}$
\end{enumerate}
is called {\it weighted Leavitt path algebra of $(E,\omega)$} and is denoted by $L_R(E,\omega)$ or just $L(E,\omega)$. In relations (3) and (4), we set $\alpha_i$ and $\alpha_i^*$ zero whenever $i > \omega(\alpha)$. When $R$ is not commutative, then we consider $L_R(E,\omega)$ as a left $R$-module.
\end{definition}

Weighted Leavitt path algebras are involutary graded rings with unit if $E^0$ is finite and local units otherwise. In fact,  the weighted Leavitt path algebra $L_R(E,\omega)$ is a $\mathbb Z^n$-graded ring, where $n=\max\{\omega(\alpha) \mid \alpha \in E^{\st}\}$. 
The grading defined as follows:  for $v\in E^0$ define $\deg(v)=0$ and  for $\alpha \in E^{\st}$,  $\deg(\alpha_i)=e_i$ and $\deg(\alpha_i^*)=-e_i$, $1\leq i\leq \omega(\alpha)$,  where $\alpha_i \in E^1$ (note that the grading depends on the ordering of edges  $E^1$).  Here $e_i$ denotes the element of $\mathbb Z^n$ whose $i-th$ component is $1$ and whose other components are $0$ 

\begin{example}\label{wlpapp}
Let $K$ be a field. Then the weighted Leavitt path algebra of a weighted graph consisting of one vertex and $n+k$ loops of weight $n$ 
is $L_K(n,n+k)$. To show this, let $E^{\st}=\{y_1,\dots,y_{n+k}\}$ with $\omega(y_i)=n$, $1\leq i\leq n+k$. Denote the $n$ edges corresponding to the structure edge $y_i\in E^{\st}$ by $\{y_{1i},\dots,y_{ni}\}$. We visualise this data as follows:
\begin{equation*}
\xymatrix{
& \bullet \ar@{.}@(l,d) \ar@(ur,dr)^{y_{11},\dots,y_{n1}} \ar@(r,d)^{y_{12},\dots,y_{n2}} \ar@(dr,dl)^{y_{13},\dots,y_{n3}} \ar@(l,u)^{y_{1,n+k},\dots,y_{n,n+k}}& 
}
\end{equation*}
Set $x_{sr}=y_{rs}^*$ for $1\leq r\leq n$ and $1\leq s\leq n+k$ and arrange the $y$'s and $x$'s in the matrices
\begin{equation*} 
Y=\left( 
\begin{matrix} 
y_{11} & y_{12} & \dots & y_{1,n+k}\\ 
y_{21} & y_{22} & \dots & y_{2,n+k}\\ 
\vdots & \vdots & \ddots & \vdots\\ 
y_{n1} & y_{n2} & \dots & y_{n,n+k} 
\end{matrix} 
\right), \qquad 
X=\left( 
\begin{matrix} 
x_{11\phantom{n+{},}} & x_{12\phantom{n+{},}} & \dots & x_{1n\phantom{n+{},}}\\ 
x_{21\phantom{n+{},}} & x_{22\phantom{n+{},}} & \dots & x_{2n\phantom{n+{},}}\\ 
\vdots & \vdots & \ddots & \vdots\\ 
x_{n+k,1} & x_{n+k,2} & \dots & x_{n+k,n} 
\end{matrix} 
\right) 
\end{equation*} 
Then condition~(3) of Definition~\ref{def1} precisely says that $Y\cdot X=I_{n,n}$ and condition~(4) is equivalent to 
$X\cdot Y=I_{n+k,n+k}$ which are the generators of $L_K(n,n+k)$. 
\end{example}

\begin{example}\label{exex1}
Let $(E,\omega)$ be a weighted graph where $w: E^{\st}\rightarrow \N$ is the constant map $\omega(\alpha)=1$ for all $\alpha \in E^{\st}$. Then $E^{\st}=E^1$ and $L(E,\omega)$ is isomorphic to the usual Leavitt path algebra $L(E)$. 
\end{example}

\begin{example}
In Example~\ref{wlpapp}, the map defined by $y_{1i}\mapsto y_i$, $x_{i1}\mapsto {y_i}^*$, $1\leq i \leq k+1$, $y_{i,i+k}\mapsto 1$, $x_{i+k,i}\mapsto 1$ $2\leq i \leq n$ and $y_{ij}\mapsto 0$, $x_{ji}\mapsto 0$ otherwise, induces a surjective ring homomorphism 
\[L_K(n,n+k) \longrightarrow L_K(1,k+1)\] showing the Leavitt algebra $L(1,k+1)$ is a quotient of $L(n,n+k)$. In Theorem~\ref{thmn1} we show that a simple weighted Leavitt path algebras has to be a simple Leavitt path algebra. 
\end{example}

\begin{example}
Consider a weighted graph with one vertex and one structured edge $\alpha$ of weight $n$, i.e., $E^1=\{\alpha_1,\dots,\alpha_n\}$  and an unweighted graph $F$ with one vertex and $n$ edges $\{\alpha_1,\dots,\alpha_n\}$. Then the map $(E,\omega)\longrightarrow F, \alpha_i \mapsto \alpha_i^*,$
induces an isomorphism on the level of LPAs, namely
\[\xymatrix{
L\big(\!\!\!\!\!\!\!\!\!\!\!\!\!& \bullet\ar@(ul,ur)^{\alpha_{1},\dots,\alpha_{n}}
},\omega\big)\cong
\xymatrix{
L\big(\!\!\!&   \bullet \ar@{.}@(l,d) \ar@(ur,dr)^{\alpha_{1}} \ar@(r,d)^{\alpha_{2}} \ar@(dr,dl)^{\alpha_{3}} 
\ar@(l,u)^{\alpha_{n}}& 
}\big).\]
Note that this isomorphism is not graded as $L(E,\omega)$ is $\mathbb Z^n$-graded, whereas $L(F)$ is just $\mathbb Z$-graded. The graph $F$ is called the unweighted graph associated with $E$ (see Definition~\ref{defn3}). 
\end{example}

Until the end of this section $R$ denotes a unital ring and $(E,\omega)$ a weighted graph. For any $v\in E^0$ which is not a sink  (i.e.,  $s^{-1}(v)\neq\emptyset$) fix an $\alpha^{v}\in E^{\st}$ such that 
\begin{equation}\label{specialedge}
s(\alpha^{v})=v\text{ and } \omega(\alpha^{v})=\omega(v)
\end{equation}
where $\omega(v)=\max\{\omega(\alpha) \  |  \  \alpha\in E^{\st},s(\alpha)=v\}$.

\begin{definition}[{\sc Generalised path}]\label{deffgp}
Set $s(v):=v$, $r(v):=v$, $s(\alpha_i):=s(\alpha)$, $r(\alpha_i):=r(\alpha)$, $s(\alpha^*_i):=r(\alpha)$ and $r(\alpha^*_i):=s(\alpha)$ for any $v\in E^0$, $\alpha \in E^{\st}$ and $1\leq i \leq \omega(\alpha)$. Let $\X$ denote the set of all nonempty words over $X:=E^0\cup E^1\cup(E^1)^*$. A word $p\in \X$ is called {\it a generalised path} if either $p=x_1x_2\dots x_n$ for some $n\geq 1$ and $x_1,\dots,x_n\in E^1\cup (E^1)^*$ such that $r(x_i)=s(x_{i+1})$, $1\leq i \leq n-1$ or $p=x_1$ for some $x_1\in E^0$. The {\it length} $|p|$ of a generalised path $p=x_1\dots x_n$ is $n$ if $n\geq 1$ and $x_1,\dots,x_n\in E^1\cup (E^1)^*$ or $0$ if $n=1$ and $x_1\in E^0$. $p$ is called {\it trivial} if $|p|=0$ and {\it nontrivial} if $|p|\geq 1$. Further we set $s(p):=s(x_1)$, and $r(p):=r(x_n)$.  
\end{definition}

\begin{definition}[{\sc Normal element of $R\X$}]\label{deffne}
A word $A\in\X$ is called {\it word of type I} if $A=\alpha^v_i(\alpha^v_j)^*$ for some $v\in E^0$ which is not a sink and some $1\leq i,j\leq \omega(\alpha^v)$. $A$ is called {\it word of type II} if $A=\alpha^*_1\beta_1$ for some $\alpha,\beta\in E^{\st}$. A generalised path is called {\it normal} if it does not contain any subwords of type I or type II. An element of $R\X$ is called {\it normal} if it lies in the linear span $R\X_N$ of all normal generalised paths.
\end{definition}

We will show that any element of $L_R(E,\omega)$ has precisely one normal representative in $R\X$. For this we need some definitions and results from \cite{bergman78} which we will recall below. Note that a weighted Leavitt path algebra is a quotient of a free $R$-ring where $R$ is a not necessarily commutative ring while in \cite{bergman78} free associative, unital algebras over commutative rings are considered. Hence we have to make a few adaptations (see Remark~\ref{hhmmnn}).

Here we recall the basics of Bergman's diamond machinery needed in the paper. Until the end of the proof of Theorem~\ref{thm10}, $R$ denotes a unital ring and $X$ any set. By an {\it $R$-ring} we mean a (not necessarily unital) ring which is an $R$-bimodule such that the multiplication is left linear in the first argument and right linear in the second one. By an {\it ideal of an $R$-ring $A$} we mean an ideal of the ring $A$ which is an $R$-subbimodule of $A$. Let $\langle X\rangle$ denote the semigroup (with juxtaposition) of all nonemtpy words over $X$ and set $\overline{\X}:=\X\cup\{\text{empty word}\}$. Further let $R\X$ denote the {\it free $R$-ring generated by $X$}, i.e. the free left $R$-module generated by $\X$ made an $R$-ring by the multiplication $(\sum\limits_{x\in\X}r_xx)(\sum\limits_{y\in\X}s_yy)=\sum\limits_{x,y\in\X}r_xs_yxy$.  

\begin{definition}[{\sc Reduction system}]\label{def5}
 Let $S$ be a set of pairs of the form $\sigma= (W_\sigma,f_\sigma)$, where $W_\sigma\in \langle X\rangle$ and $f_\sigma\in R\langle X\rangle$ such that all coefficients of $f_\sigma$ lie in the center of $R$. Then $S$ is called a {\it reduction system} for $R\X$.
For any $\sigma\in S$ and $A, B\in \overline{\X}$, let $r_{A\sigma B}$ denote the $R$-bimodule endomorphism of $R\X$ that maps $AW_\sigma B$ to $Af_\sigma B$ and fixes all other elements of $\X$. The maps $r_{A\sigma B}: R\X\rightarrow R\X$ are called {\it reductions}.

\end{definition}

Until the end of the proof of Theorem~\ref{thm10}, $S$ denotes a reduction system for $R\X$.

\begin{definition} [{\sc Irreducible element, final sequence of reduction}]\label{def6}
We shall say a reduction $r_{A\sigma B}$ acts {\it trivially} on an element $a\in R\X$ if the coefficient of $AW_\sigma B$ in $a$ is zero, and we shall call $a$ {\it irreducible (under S)} if every reduction is trivial on $a$, i.e., if $a$ involves none of the monomials $AW_\sigma B$. The $R$-subbimodule of all irreducible elements of $R\X$ will be denoted $R\X_{\irr}$. A finite sequence of reductions $r_1,\dots, r_n$ will be said to be {\it final} on $a\in R\X$ if $r_n\dots r_1(a)\in R\X_{\irr}$.

\end{definition}

\begin{definition}[{\sc Reduction-finite element, reduction-unique element}]\label{def7}
An element $a$ of $R\X$ will be called {\it reduction-finite} if for every infinite sequence $r_1, r_2 ,\dots$ of reductions, $r_i$ acts trivially on $r_{i-1} \dots r_1(a)$ for all sufficiently large $i$. If $a$ is reduction-finite, then any maximal sequence of reductions $r_i$, such that each $r_i$ acts nontrivially on $r_{i-l}\dots r_1(a)$, will be finite, and hence a final sequence. It follows from their definition that the reduction-finite elements form an $R$-subbimodule of $R\X$. We shall call an element $a\in R\X$ {\it reduction-unique} if it is reduction-finite, and if its images under all final sequences of reductions are the same. This common value will be denoted $r_S(a)$. The set of reduction-unique elements  of $R\X$ forms an $R$-subbimodule, and $r_S$ is a bilinear map (i.e. an $R$-bimodule homomorphism) of this submodule into $R\X_{\irr}$ (see \cite[proof of Lemma 1.1(i)]{bergman78}).
\end{definition}

\begin{definition}[{\sc Ambiguity, resolvable ambiguity}]\label{def8}
A $5$-tuple $(\sigma, \tau, A, B, C)$ with $\sigma,\tau\in S$ and $A, B, C\in \X$,
such that $W_\sigma= AB$ and $W_\tau = BC$ is called an {\it overlap ambiguity} of $S$. We shall say the overlap ambiguity $(\sigma, \tau, A, B, C)$ is {\it resolvable} if there exist compositions of reductions, $r$ and $r'$, such that $r(f_\sigma C) = r'(Af_\tau)$. Similarly, a $5$-tuple $(\sigma, \tau, A, B, C)$ with $\sigma\neq\tau$ and $A,B,C\in \overline{\X}$ will be called an {\it inclusion ambiguity} if $W_\sigma = B$, $W_\tau = ABC$ and such an ambiguity will be called {\it resolvable} if $Af_\sigma C$ and $f_\tau$ can be reduced to a common expression.
\end{definition}

\begin{definition} [{\sc Semigroup partial ordering compatible with $S$}]\label{def9}
By a {\it semigroup partial ordering} on $\X$ we shall mean a partial order $\leq$ such
that 
\[B < B'\Rightarrow ABC < AB'C\]
for any $B, B'\in \X,A,C\in\overline{\X}$. We call $\leq$ {\it compatible} with $S$ if for all $\sigma\in S$, $f_\sigma$ is a linear combination of monomials $< W_\sigma$.

\end{definition}

We are in a position to state Bergman's diamond lemma~\cite[Theorem~1.2]{bergman78}. This theorem will be used to find a basis for the weighted Leavitt path algebras.

\begin{theorem} \label{thm10}
Let $\leq$ be a semigroup partial ordering on $\X$ compatible
with $S$ and having descending chain condition. Then the following conditions are
equivalent:
\begin{enumerate}[\upshape(1)]
\item All ambiguities of $S$ are resolvable.
\medskip

\item All elements of $R\X$ are reduction-unique under $S$.

\medskip 

\item $R\X_{\irr}$ is a set of representatives for the elements of the $R$-ring $R\X/I$, where $I$ is the ideal of $R\X$ generated by the elements $W_\sigma-f_\sigma~(\sigma\in S)$.
\end{enumerate}
When these conditions hold, $R\X/I$ may be identified with the $R$-bimodule $R\X_{\irr}$ made an $R$-ring by the multiplication $a\cdot b = r_S(ab)$.
\end{theorem}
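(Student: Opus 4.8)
The statement is Bergman's diamond lemma, so the plan is to follow the proof of \cite[Theorem~1.2]{bergman78} and verify that every step survives the passage from a commutative base ring to the present setting of a free $R$-ring over a possibly noncommutative $R$. The only genuine adaptation is bookkeeping about bimodule structure, and it hinges on the hypothesis built into Definition~\ref{def5} that the coefficients of each $f_\sigma$ are central in $R$. Concretely, writing $f_\sigma=\sum_i c_i m_i$ with $m_i\in\X$, one checks that each reduction $r_{A\sigma B}$ is right $R$-linear precisely because $sc_i=c_is$ for all $s\in R$; together with the automatic left $R$-linearity this makes every $r_{A\sigma B}$ an $R$-bimodule endomorphism of $R\X$, which is exactly what is needed for $r_S$ to be the $R$-bimodule homomorphism asserted in Definition~\ref{def7}. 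With this checked, I would prove the cycle $(3)\Rightarrow(1)\Rightarrow(2)\Rightarrow(3)$, of which $(1)\Rightarrow(2)$ is the substantive implication and the remaining two are formal.

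For $(3)\Rightarrow(1)$, given an overlap ambiguity $(\sigma,\tau,A,B,C)$ with $W_\sigma=AB$ and $W_\tau=BC$, one has $f_\sigma C\equiv W_\sigma C=ABC=AW_\tau\equiv Af_\tau\pmod{I}$; reducing both sides to irreducible elements and invoking that $R\X_{\irr}$ carries a unique representative per class forces the two irreducible outcomes to coincide, so the ambiguity resolves (inclusion ambiguities are identical). For $(2)\Rightarrow(3)$, reduction-uniqueness makes $r_S\colon R\X\to R\X_{\irr}$ a well-defined $R$-bimodule map that is the identity on $R\X_{\irr}$, and it remains to identify $\ker r_S=I$. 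The inclusion $I\subseteq\ker r_S$ follows because $r_{A\sigma B}(AW_\sigma B)=Af_\sigma B$ forces $r_S(A(W_\sigma-f_\sigma)B)=0$ for all $A,B$, and bilinearity of $r_S$ absorbs left and right $R$-coefficients; conversely, if $r_S(a)=0$ then a final sequence of reductions carries $a$ to $0$, and each step changes the current element only by a multiple of some $A(W_\sigma-f_\sigma)B\in I$, whence $a\in I$. Thus the induced map $R\X_{\irr}\cong R\X/I$ is an $R$-bimodule isomorphism, which is exactly assertion $(3)$, and transporting the product of $R\X/I$ across it yields the closing formula $a\cdot b=r_S(ab)$.

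The heart is $(1)\Rightarrow(2)$, and this is where I expect the main obstacle. First I would deduce reduction-finiteness of every element from the descending chain condition: compatibility forces each nontrivial reduction to replace a monomial $AW_\sigma B$ by a combination of strictly smaller monomials $Af_\sigma B$, so a $\leq$-minimal monomial failing to be reduction-finite would, after a single reduction, land inside the reduction-finite submodule, a contradiction; hence all of $R\X$ is reduction-finite. Upgrading this to reduction-uniqueness is the delicate part, and I would argue by Noetherian induction along $\leq$: assuming every monomial strictly below $A$ is reduction-unique, I show any two reductions applicable to $A$ can be driven to a common value. When the two reducible subwords of $A$ are disjoint the reductions commute and both outcomes lie below $A$, so the induction hypothesis applies; when they overlap or are nested, $A$ localizes an overlap or inclusion ambiguity whose resolvability (hypothesis $(1)$) produces a common reduction of the two local outcomes, and this propagates up to $A$ because every monomial thereby produced is $<A$. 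The genuinely subtle point, and the one I would scrutinize in the noncommutative-$R$ setting, is that reductions act on linear combinations rather than on single words, so this Newman-lemma-style local confluence at $A$ must be bootstrapped through the induction hypothesis applied to the entire span of monomials below $A$; one must confirm that the central-coefficient condition keeps all of these manipulations inside the category of $R$-bimodule maps, which is exactly the content of the adaptation flagged above.
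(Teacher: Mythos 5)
The paper gives no proof of this statement: it is quoted verbatim as Bergman's diamond lemma \cite[Theorem~1.2]{bergman78}, the only adaptations to the non-unital, noncommutative-base setting being the central-coefficient requirement built into Definition~\ref{def5} and the unitisation alternative of Remark~\ref{hhmmnn}. Your proposal is a correct reconstruction of Bergman's original argument --- the cycle $(3)\Rightarrow(1)\Rightarrow(2)\Rightarrow(3)$, with reduction-finiteness from the descending chain condition and Noetherian induction along $\leq$ for the main implication $(1)\Rightarrow(2)$ --- and it isolates precisely the adaptation the paper relies on, namely that centrality of the coefficients of $f_\sigma$ makes every reduction an $R$-bimodule endomorphism of $R\X$, so your write-up fills in exactly what the paper delegates to the citation.
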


Now we can use the previous theorem in order to prove that any element of $L_R(E,\omega)$ has precisely one normal representative in $R\X$ where $X=E^0\cup E^1\cup (E^1)^*$.

\begin{theorem} \label{thm11}
Let $R$ be a unital ring and $(E,\omega)$ a row-finite weighted graph. Then the weighted Leavitt path algebra $L_R(E,\omega)$ has a basis consisting of normal generalised paths. Namely, the basis elements are of the form $p=x_1\dots x_n$, $x_i \in E^1 \cup (E^1)^*$, $r(x_i)=s(x_{i+1})$, $1\leq i \leq n-1$ or $p=x_1$, $x_1\in E^0$ such that none of the words $\alpha^v_i(\alpha^v_j)^*$ where $v\in E^0$ is not a sink and $1\leq i,j\leq \omega(\alpha^v)$ and $\alpha^*_1\beta_1$ where $\alpha,\beta\in E^{\st}$ is a subword of $p$. 

\end{theorem}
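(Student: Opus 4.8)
The plan is to realise $L_R(E,\omega)$ as a quotient $R\X/I$ and apply Bergman's diamond lemma, Theorem~\ref{thm10}. First I would record a reduction system $S$ whose rules are the defining relations solved for a distinguished leading monomial: \ep{i} $vw\mapsto\delta_{vw}v$ for $v,w\in E^0$; \ep{ii} $vx\mapsto\delta_{v,s(x)}x$ and $xv\mapsto\delta_{v,r(x)}x$ for $v\in E^0$, $x\in E^1\cup(E^1)^*$; \ep{iii} $xy\mapsto 0$ for $x,y\in E^1\cup(E^1)^*$ with $r(x)\neq s(y)$; \ep{iv} $\alpha^v_i(\alpha^v_j)^*\mapsto\delta_{ij}v-\sum_{\alpha\neq\alpha^v,\,s(\alpha)=v}\alpha_i\alpha_j^*$, obtained from relation~(3) by solving for the distinguished edge $\alpha^v$; and \ep{v} $\alpha_1^*\beta_1\mapsto\delta_{\alpha\beta}r(\alpha)-\sum_{i\geq 2}\alpha_i^*\beta_i$, obtained from relation~(4) by solving for the index $i=1$. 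A short check shows the ideal generated by the elements $W_\sigma-f_\sigma$ coincides with the defining ideal of $L_R(E,\omega)$: rules \ep{i}, \ep{iv}, \ep{v} are relations (1), (3), (4) rearranged, the absorbing instances of \ep{ii} are relation (2), and the remaining instances of \ep{ii}, \ep{iii} are consequences obtained by writing $x=s(x)x=xr(x)$ and using (1)--(2); in particular \ep{iii} records $xy=\delta_{r(x),s(y)}xy$.

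For the ordering I would compare words in $\X$ first by length and then lexicographically with respect to a fixed well-ordering of the countable set $X$, chosen so that for each non-sink $v$ the distinguished letters $\alpha^v_i$ exceed all other $\gamma_i$ with $s(\gamma)=v$, and so that $\alpha_1^*>\alpha_2^*>\cdots$ for every $\alpha\in E^{\st}$. Length-then-lexicographic order is a semigroup partial order, and, each length level being a finite lexicographic power of a well-ordered set, it satisfies the descending chain condition. Compatibility with $S$ is then immediate: in \ep{i}, \ep{ii}, \ep{iii} the right-hand side is shorter than $W_\sigma$ or zero, while in \ep{iv}, \ep{v} each length-two term on the right has a strictly smaller leading letter. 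By Theorem~\ref{thm10} it remains to resolve all ambiguities. Since every $W_\sigma$ has length two and the leading words are pairwise distinct, $S$ has no inclusion ambiguities, and every overlap ambiguity is a length-three word $x_1x_2x_3$ in which both $x_1x_2$ and $x_2x_3$ are leading words. Sorting these by the type of the shared letter $x_2$ yields finitely many families, and every family in which only \ep{i}, \ep{ii}, \ep{iii} participate is routine: using that each reduction preserves the source of the first letter and the range of the last letter, both reduction paths collapse to the expected vertex-absorbed path or to $0$.

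The crux, and the step I expect to be the main obstacle, is the resolution of the two families in which relations (3) and (4) genuinely overlap, namely $\alpha^v_i(\alpha^v_1)^*\nu_1$ (rule \ep{iv} then \ep{v}) and $\mu_1^*\alpha^v_1(\alpha^v_j)^*$ (rule \ep{v} then \ep{iv}), where $s(\mu)=s(\nu)=v$. Here neither path shortens the word at the first step, and resolvability is precisely the mutual consistency of the two Leavitt-type relations. Reducing such a word along both paths and then applying \ep{ii}, \ep{iv}, \ep{v} produces two expressions that share a common double sum of the form $\sum_{l\geq 2}\sum_{\beta\neq\alpha^v,\,s(\beta)=v}\mu_l^*\beta_l\beta_j^*$ and otherwise differ only in lower-length Kronecker terms and a single starred (respectively unstarred) edge; a case split on whether $\mu=\alpha^v$ and on whether the free index equals $1$ reconciles the two outcomes in every case. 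The second family is then handled by the $*$-symmetric computation.

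Once all ambiguities are resolved, Theorem~\ref{thm10}(3) identifies $L_R(E,\omega)=R\X/I$ with $R\X_{\irr}$, and it only remains to describe the irreducible words, i.e.\ those containing no leading word as a subword. By \ep{ii} no vertex may be adjacent to any letter, so a vertex can occur only as a trivial path; by \ep{iii} every consecutive pair of edges and starred edges is range--source compatible; and by \ep{iv}, \ep{v} no subword of type~I or type~II occurs. These are exactly the normal generalised paths of Definition~\ref{deffne}, which therefore form a basis of $L_R(E,\omega)$.
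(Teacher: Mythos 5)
Your proposal is correct and follows essentially the same route as the paper: your reduction system \ep{i}--\ep{v} coincides with the paper's rules (1')--(5'), the crux is likewise identified as the two overlap families between the rules coming from relations (3) and (4), and your reconciliation of the two reduction paths (a shared double sum plus length-one Kronecker terms, with a case split on whether $\mu=\alpha^v$ and whether the free index is $1$) matches the paper's explicit resolution diagram for $\alpha^v_i(\alpha^v_1)^*\beta_1$. The only divergence is the choice of compatible ordering: the paper uses the partial order comparing length and then the number of type I/II subwords in every context $CAD$, whereas you use length-then-lexicographic order for a well-ordering of $X$ that places each $\alpha^v_i$ above the competing $\gamma_i$ with $s(\gamma)=v$ and $\alpha_1^*$ above $\alpha_i^*$ for $i\geq 2$; both choices satisfy the hypotheses of Theorem~\ref{thm10}, so this is a cosmetic difference rather than a different method.
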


\begin{proof} 
In order to be able to apply Theorem~\ref{thm10}, we replace the relations (1)-(4) in Definition~\ref{def3} by the relations
\begin{enumerate}[(1')]
\item For any $v,w \in E^0$, \[vw = \delta_{vw}v,\]
\medskip

\item For any $v \in E^0$, $\alpha\in E^{\st}$ and $1\leq i \leq \omega(\alpha)$,
\begin{align*}
v\alpha_i&=\delta_{vs(\alpha)}\alpha_i,\\
\alpha_iv&=\delta_{vr(\alpha)}\alpha_i,\\
v\alpha_i^*&=\delta_{vr(\alpha)}\alpha_i^* \text{ and}\\
\alpha_i^*v&=\delta_{vs(\alpha)}\alpha_i^*
\end{align*}

\medskip

\item For any $\alpha,\beta\in E^{\st}$, $1\leq i \leq \omega(\alpha)$ and $1\leq j \leq \omega(\beta)$,

\begin{align*}
\alpha_i\beta_j&=0\text{ if  } r(\alpha)\neq s(\beta),\\
\alpha_i^*\beta_j&=0\text{ if  } s(\alpha)\neq s(\beta),\\
\alpha_i\beta_j^*&=0\text{ if  } r(\alpha)\neq r(\beta)\text{ and}\\
\alpha_i^*\beta_j^*&=0\text{ if  } s(\alpha)\neq r(\beta)\\
\end{align*}

\medskip

\item For all $v\in E^0$ which are not sinks and $1\leq i, j\leq \omega(\alpha^v)$, 
\[\alpha^v_i(\alpha^v_j)^*= \delta_{ij}v-\sum\limits_{\substack{\alpha\in E^{\st}, s(\alpha)=v\\\alpha\neq \alpha^v}}\alpha_i\alpha_j^*\]
and

\medskip 

\item For all $\alpha,\beta\in E^{\st}$ such that $s(\alpha)=s(\beta)$, 
\[\alpha_1^*\beta_1= \delta_{\alpha\beta}r(\alpha)-\sum\limits_{2\leq i\leq \max\{\omega(\alpha),\omega(\beta)\}}\alpha_i^*\beta_i.\]

\end{enumerate}
In relations (4') and (5'), we set $\alpha_i$ and $\alpha_i^*$ zero whenever $i > \omega(\alpha)$. Clearly the relations (1')-(5') generate the same ideal $I$ of $R\X$ as the relations (1)-(4). Denote by $S$ the reduction system for $R\X$ defined by the relations (1')-(5') (i.e.,  $S$ is the set of all pairs $\sigma=(W_\sigma,f_\sigma)$ where $W_\sigma$ equals the left hand side of an equation in (1')-(5') and $f_\sigma$ the corresponding right hand side).\\
For any $A=x_1\dots x_n\in \X$ set $l(A):=n$ and $m(A):= \big |\{i\in\{1,\dots,n-1\}|x_ix_{i+1} \text{ is of type I or II}\} \big |$. Define a partial ordering $\leq$ on $\X$ by 
\[A\leq B\Leftrightarrow \big [A=B\big ]~\lor~\big[l(A)<l(B)\big]~\lor \big[l(A)=l(B)~\land~ \forall C,D\in \overline\X:m(CAD)<m(CBD)\big].\]
Clearly $\leq$ is a semigroup partial ordering on $\X$ compatible with $S$ and the descending chain condition is satisfied.\\
It remains to show that all ambiguities of $S$ are resolvable. In the table below we list all types of ambiguities which may occur.
\begin{table}[!htbp]
\resizebox{17.7cm}{!}{
\begin{tabular}{|c||c|c|c|c|c|}
\hline &(1')&(2')&(3')&(4')&(5')\\
\hhline{|=#=|=|=|=|=|} (1')&$uvw$&$vw\alpha_i$,$vw\alpha_i^*$ &-&-&-\\
\hline (2')&$\alpha_ivw$, $\alpha_i^*vw$&$v\alpha_iw$, $v\alpha_i^*w$, $\alpha_iv\beta_j$,  etc.&$v\alpha_i\beta_j$, $v\alpha_i\beta_j^*$ etc.&$w\alpha_i^v(\alpha_j^v)^*$&$v\alpha_1^*\beta_1$\\
\hline (3')&-&$\alpha_i\beta_jv$, $\alpha_i^*\beta_jv$ etc.&$\alpha_i\beta_j\gamma_k$, $\alpha_i\beta_j\gamma_k^*$ etc.&$\beta_k\alpha_i^v(\alpha_j^v)^*$, $\beta_k^*\alpha_i^v(\alpha_j^v)^*$&$\gamma_k\alpha_1^*\beta_1$, $\gamma_k^*\alpha_1^*\beta_1$\\
\hline (4')&-&$\alpha_i^v(\alpha_j^v)^*w$&$\alpha_i^v(\alpha_j^v)^*\gamma_k$, $\alpha_i^v(\alpha_j^v)^*\gamma_k^*$&-&$\alpha_i^v(\alpha_1^v)^*\beta_1$\\
\hline (5')&-&$\alpha_1^*\beta_1v$&$\alpha_1^*\beta_1\gamma_i$, $\alpha_1^*\beta_1\gamma_i^*$&$\alpha_1^*\alpha_1^v(\alpha_j^v)^*$&-\\
\hline
\end{tabular}
}
\end{table}\\
Note that there are no inclusion ambiguities. The ((4')-(5') and (5')-(4')) ambiguities $\alpha_i^v(\alpha_1^v)^*\beta_1$ and $\alpha_1^*\alpha_1^v(\alpha_j^v)^*$, where $v\in E^0$ is not a sink, $1\leq i,j\leq \omega(\alpha^v)$ and $\alpha,\beta\in E^{\st}$ such that $s(\alpha)=s(\beta)=v$ are the ones which are most difficult to resolve. We will show how to resolve the ambiguity $\alpha_i^v(\alpha_1^v)^*\beta_1$ and leave the other cases to the reader.
\xymatrixcolsep{-7pc}
\xymatrixrowsep{4pc}
\[\xymatrix{
&\alpha^v_i(\alpha_1^v)^*\beta_1\ar[rd]^-{(5')}\ar[ld]_-{(4')}&
\\
{\begin{array}{cc}&(\delta_{i1}v-\sum\limits_{\substack{s(\alpha)=v\\\alpha\neq \alpha^v}}\alpha_i\alpha_1^*)\beta_1\\=&\delta_{i1}v\beta_1-\sum\limits_{\substack{s(\alpha)=v\\\alpha\neq \alpha^v}}\alpha_i\alpha_1^*\beta_1\end{array}}\ar[d]_-{(5')}
&&
{\begin{array}{cc}&\alpha^v_i(\delta_{\alpha^v\beta}r(\alpha^v)-\sum\limits_{j=2}^{\omega(\alpha^v)}(\alpha^v_j)^*\beta_j)\\=&\delta_{\alpha^v\beta}\alpha^v_ir(\alpha^v)-\sum\limits_{j=2}^{\omega(\alpha^v)}\alpha^v_i(\alpha^v_j)^*\beta_j\end{array}\ar[d]^-{(4')}}
\\
{\begin{array}{cc}&\delta_{i1}v\beta_1\\&-\sum\limits_{\substack{s(\alpha)=v\\\alpha\neq \alpha^v}}\alpha_i(\delta_{\alpha\beta}r(\alpha)-\sum\limits_{j=2}^{\omega(\alpha^v)}\alpha_j^*\beta_j)\\=&\delta_{i1}v\beta_1-\sum\limits_{\substack{s(\alpha)=v\\\alpha\neq \alpha^v}}\delta_{\alpha\beta}\alpha_ir(\alpha)\\&+\sum\limits_{\substack{s(\alpha)=v\\\alpha\neq \alpha^v}}\sum\limits_{j=2}^{\omega(\alpha^v)}\alpha_i\alpha_j^*\beta_j\\=&\delta_{i1}v\beta_1-\delta_{\beta\neq\alpha^v}\beta_ir(\beta)\\&+\sum\limits_{\substack{s(\alpha)=v\\\alpha\neq \alpha^v}}\sum\limits_{j=2}^{\omega(\alpha^v)}\alpha_i\alpha_j^*\beta_j\end{array}}\ar[rd]_-{(2')}
&&
{\begin{array}{cc}&\delta_{\alpha^v\beta}\alpha^v_ir(\alpha^v)\\&-\sum\limits_{j=2}^{\omega(\alpha^v)}(\delta_{ij}v-\sum\limits_{\substack{s(\alpha)=v\\\alpha\neq \alpha^v}}\alpha_i\alpha_j^*)\beta_j\\=&\delta_{\alpha^v\beta}\alpha^v_ir(\alpha^v)-\sum\limits_{j=2}^{\omega(\alpha^v)}\delta_{ij}v\beta_j\\&+\sum\limits_{j=2}^{\omega(\alpha^v)}\sum\limits_{\substack{s(\alpha)=v\\\alpha\neq \alpha^v}}\alpha_i\alpha_j^*\beta_j\\=&\delta_{\alpha^v\beta}\beta_ir(\beta)-\delta_{i\geq 2}v\beta_i\\&+\sum\limits_{j=2}^{\omega(\alpha^v)}\sum\limits_{\substack{s(\alpha)=v\\\alpha\neq \alpha^v}}\alpha_i\alpha_j^*\beta_j\end{array}}\ar[ld]^-{(2')}
\\
&{\begin{array}{cc}&\delta_{i1}\beta_1-\delta_{\beta\neq\alpha^v}\beta_i+\sum\limits_{\substack{s(\alpha)=v\\\alpha\neq \alpha^v}}\sum\limits_{j=2}^{\omega(\alpha^v)}\alpha_i\alpha_j^*\beta_j\\=&\delta_{\alpha^v\beta}\beta_i-\delta_{i\geq 2}\beta_i+\sum\limits_{j=2}^{\omega(\alpha^v)}\sum\limits_{\substack{s(\alpha)=v\\\alpha\neq \alpha^v}}\alpha_i\alpha_j^*\beta_j.\end{array}}&}
\]
It follows from Theorem~\ref{thm10}, that $R\X_{\irr}$ is a set of representatives for the elements of $R\X/I=L_R(E,\omega)$. But clearly $R\X_{\irr}=R\X_N$.
\end{proof}

In \cite{zel12} a basis for a Leavitt path algebras were described. Here we obtain this result as a corollary of Theorem~\ref{thm11}. 

\begin{corollary}
Let $E$ be a directed graph and $L_R(E)$ the associated Leavitt path algebra. Then the monomials $pq^*$, where $p=x_1\dots x_n$ and $q=y_1\dots y_m$ are paths (of possibly length zero) such that $x_ny_m^*\neq\alpha^v (\alpha^v)^*$ for any $v\in E^0$ which is not a sink, form a basis for $L_R(E)$. 
\end{corollary}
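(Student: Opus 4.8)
The plan is to deduce the corollary directly from Theorem~\ref{thm11} by specialising the normal-form description to an unweighted graph. By Example~\ref{exex1}, if we regard $E$ as a weighted graph with $\omega(\alpha)=1$ for every $\alpha$ (so that $E^{\st}=E^1$ and every structured edge carries a single edge), then $L_R(E)\cong L_R(E,\omega)$, and Theorem~\ref{thm11} supplies a basis consisting of normal generalised paths. It therefore suffices to show that, under the constant weight $\omega\equiv 1$, the normal generalised paths are exactly the monomials $pq^*$ listed in the statement, and that this description is a bijection.

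First I would rewrite the two forbidden patterns of Definition~\ref{deffne} in the unweighted setting. Since $\omega(\alpha^v)=1$, the indices in a type-I word $\alpha^v_i(\alpha^v_j)^*$ are forced to be $i=j=1$, so a type-I subword is simply $\alpha^v(\alpha^v)^*$ for a non-sink $v$. Likewise a type-II subword $\alpha_1^*\beta_1$ becomes $\alpha^*\beta$ for edges $\alpha,\beta\in E^1$, i.e.\ a starred letter immediately followed by an unstarred one.

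Next I would use the type-II condition to force each normal generalised path into the shape $pq^*$. Reading a word $x_1\dots x_n\in\X$ (with each letter in $E^1\cup(E^1)^*$) from left to right, the absence of any subword $\alpha^*\beta$ means that once a starred letter appears, every subsequent letter is starred; hence the word splits as an unstarred block followed by a starred block. The generalised-path condition $r(x_i)=s(x_{i+1})$ makes the unstarred block a genuine path $p$. Writing the starred block as $w_1^*\dots w_m^*$ with $w_k\in E^1$, the same condition reads $s(w_k)=r(w_{k+1})$, so that $q:=w_m\cdots w_1$ is a path with $q^*=w_1^*\dots w_m^*$; moreover the junction condition at the split gives $r(p)=r(w_1)=r(q)$, so $pq^*$ is a nonzero monomial. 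The degenerate cases are absorbed into this description: a trivial path $v\in E^0$ is $pq^*$ with $p=q=v$, an all-unstarred path is $pq^*$ with $q=r(p)$, and an all-starred path is $pq^*$ with $p=r(q)$. Finally, once the word is in the form $pq^*$, the only place where an unstarred letter is immediately followed by a starred one is the junction $x_n y_m^*$ between $p$ and $q^*$; within $p$ no letter is starred and within $q^*$ no letter is unstarred, so no further type-I or type-II subword can occur. Consequently the type-I avoidance condition collapses to the single requirement $x_n y_m^*\neq\alpha^v(\alpha^v)^*$ for every non-sink $v$, which is precisely the condition in the statement, and the corollary follows.

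I expect the main obstacle to be purely organisational rather than conceptual: the bookkeeping in reversing the starred block so as to realise it as $q^*$ for an honest path $q$, together with the verification that the correspondence between normal generalised paths and the listed monomials $pq^*$ is an exact bijection (every normal path is captured and no monomial is counted twice), with the length-zero paths handled correctly. This is routine but is the one step where a sign- or orientation-type slip could creep in.
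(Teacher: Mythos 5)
Your proposal is correct and is essentially the paper's own (implicit) argument: the paper states this corollary as an immediate specialisation of Theorem~\ref{thm11} to the constant weight $\omega\equiv 1$, which is exactly what you carry out. Your explicit verification that type-II avoidance forces the shape $pq^*$ and that type-I avoidance reduces to the single junction condition $x_ny_m^*\neq\alpha^v(\alpha^v)^*$ correctly fills in the details the paper leaves to the reader.
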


In Section \ref{sec3} we will use Theorem~\ref{thm11} to determine when a weighted Leavitt path algebra is simple resp. graded simple. In Section \ref{sec7} we will use it to determine when a weighted Leavitt path algebra is a domain. In order to do this we need the concept of normal forms.

\begin{definition}[{\sc Normal form of an element of $L_R(E,\omega)$}]\label{def12}
Let $a\in L_R(E,\omega)$. Then the unique normal representative of $a\in R\X$ is called the {\it normal form} of $a$ and is denoted by $\NF(a)$. It follows from~\cite[Lemma 1.1]{bergman78} that
\begin{align*}
\NF: L_R(E,\omega)&\longrightarrow R\X_N\\
a&\longmapsto \NF(a)
\end{align*} 
is an isomorphism of $R$-bimodules (note that $\NF=r_S$). If we make $R\X_N$ an $R$-ring by defining $\NF(a)\cdot \NF(b):=\NF(ab)$, then $\NF$ is an isomorphism of $R$-rings.
\end{definition}

\begin{remark}\label{hhmmnn}
As mentioned above, for the diamond lemma, Bergman's starting point is a unital free algebra. One can state and use the diamond lemma in the setting of non-unital free algebras as our treatment in this section. However one can also start with $X=E^0\cup E^1 \cup {E^1}^*$ and consider the unital free algebra on $X$ subject to the weighted Leavitt path algebra relations. This gives the \emph{unitisation} ring $L_R(E,\omega)\times R$. One can then conduct the proof of Theorem~\ref{thm11} in this ring. It is then easy to obtain the normal forms for $L_R(E,\omega)$ from this setting as well. 
\end{remark}

\section{Classification of simple and graded simple weighted Leavitt path algebras}\label{sec3}
In this section $R$ denotes a ring and $(E,\omega)$ a weighted graph. As usual, we call an ideal $J$ of $L_R(E,\omega)$ {\it proper} if $J\neq\{0\}$ and $J\neq L_R(E,\omega)$. Note that an ideal of the ring $L_R(E,\omega)$ is the same as an ideal of the $R$-ring $L_R(E,\omega)$ since $L_R(E,\omega)$ has local units.

In Definition \ref{defn2} we define reducible and irreducible weighted graphs. We will show that if $(E,\omega)$ is reducible, then $L_R(E,\omega)$ is isomorphic to $L_R(F)$ for some unweighted graph $F$. It is an open question if there are examples of irreducible graphs $(E,\omega)$ such that $L_R(E,\omega)$ is isomorphic to $L_R(F)$ for some unweighted graph $F$. However we will show that if $(E,\omega)$ is irreducible, then $L_R(E,\omega)$ is not graded simple. The main idea to show that $L_R(E,\omega)$ is not graded simple provided $(E,\omega)$ is irreducible is to find a nontrivial lr-normal generalised path (cf. Definition \ref{deflrn}). These are normal generalised paths $p$ which have the property that if $o$ and $q$ are nontrivial normal generalised paths such that $r(o)=s(p)$ and $s(q)=r(p)$, then $opq$ again is a normal generalised path. It is easy to see that a nontrivial lr-normal generalised path generates a proper graded ideal (follows from the uniqueness of the normal form).

\begin{definition}[{\sc Path, tree, cycle}]\label{defn0}
A generalised path $x_1\dots x_n$ is called a {\it path} if $n=1$ and $x_1\in E^0$ or $n\geq 1$ and $x_1,\dots,x_n\in E^1$. If $u,v\in E^0$ and there is a path $p$ such that $s(p)=u$ and $r(p)=v$, then we write $u\geq v$. Clearly $\geq$ is a preorder on $E^0$. If $u\in E^0$ then $T(u):=\{v\in E^0 \ |  \ u\geq v\}$ is called {\it tree of $u$}. A nontrivial path $p$ such that $v = s(p) = r(p)$ is called a {\it closed path based at $v$}. If $p = x_1\dots x_n$ is a closed path based at $v = s(p)$ and $s(x_i)\neq s(x_j)$ for every
$i\neq j$, then $p$ is called a {\it cycle}. 
\end{definition}

\begin{definition}[{\sc Connected components, dual of a generalised path}]\label{defn00}
If $u,v\in E^0$ and there is a generalised path $p$ such that $s(p)=u$ and $r(p)=v$, then we write $u\geq_g v$. Clearly $\geq_g$ is an equivalence relation on $E^0$. The equivalence classes of $\geq_g$ are called {\it connected components}. $(E,\omega)$ is called {\it connected} if there is only one connected component. Set $v^*:=v$ for any $v\in E^0$ and $(\alpha_i^*)^*:=\alpha_i$ for any $\alpha_i\in E^1$. If $p=x_1\dots x_n$ is a generalised path, then $p^*:=x^*_n\dots x_1^*$ is called {\it dual of $p$}. Note that $p^*$ is a generalised path such that $s(p^*)=r(p)$ and $r(p^*)=s(p)$.
\end{definition}

\begin{definition}[{\sc Circle graph, line graph, oriented line graph}]\label{defn1}
A weighted graph $(E,\omega)$ is called {\it cyclic} if it contains a cycle and {\it acyclic} otherwise. $(E,\omega)$ is called a {\it circle graph}, if it is connected, cyclic and $|s^{-1}(v)|,|r^{-1}(v)|\leq 1$ for any $v\in E^0$. $(E,\omega)$ is called a {\it line graph} if it is connected, acyclic and $|s^{-1}(v)|+|r^{-1}(v)|\leq 2$ for any $v\in E^0$. $(E,\omega)$ is called an {\it oriented line graph} if it is a line graph such that $|s^{-1}(v)|,|r^{-1}(v)|\leq 1$ for any $v\in E^0$.
\end{definition}

\begin{definition}[{\sc Unweighted graph, weight forest, reducible graph, irreducible graph}]\label{defn2}
A weighted graph $(E,\omega)$ is called {\it unweighted} if $\omega= 1$. If $(E,\omega)$ is unweighted, we identify $E^1$ and $E^{\st}$ and write $L_R(E)$ instead of $L_R(E,\omega)$ (see Example~\ref{exex1}). An $\alpha\in E^{\st}$ is called {\it weighted} if $\omega(\alpha)>1$ and {\it unweighted} otherwise. The set of all weighted elements of $E^{\st}$ is denoted by $E^{\st}_\omega$. A $v\in E^0$ is called {\it weighted} if $\omega(v)>1$ and {\it unweighted} otherwise. The set of all weighted elements of $E^0$ is denoted by $E^{0}_\omega$. The set $\overline{E^{0}_\omega}:=\bigcup\limits_{v\in E^{0}_\omega}T(v)$ is called {\it weight forest of $(E,\omega)$}. A weighted graph 
$(E,\omega)$ with $E^{0}_\omega\neq \emptyset$  is called {\it reducible} if $|s^{-1}(v)|,|r^{-1}(v)\cap s^{-1}(\overline{E^{0}_\omega})|\leq 1$  for any 
$v\in \overline{E^{0}_\omega}$ and {\it irreducible} otherwise. 
\end{definition}

Consider the weighted graph $(E',\omega')$ one gets by dropping all vertices which do not belong to the weight forest $\overline{E^{0}_\omega}$ and all structured edges $\alpha$ such that $s(\alpha)$ or $r(\alpha)$ does not belong to the weight forest. One checks easily that $(E,\omega)$ is reducible if and only if all connected components of $(E',\omega')$ are either circle graphs or oriented line graphs.

\begin{example}\label{exred}
Let $(E,\omega)$ be the weighted graph below. 
\[
\xymatrix{&x\ar[d]^{\gamma}&y\ar[d]^{\delta}\\
u  \ar[r]_{\alpha_1,\alpha_2}  & v\ar[r]_{\beta} &w 
}
\]
Note that $\overline{E^{0}_\omega}=\{u,v,w\}$. Then 
\[
\xymatrix{u  \ar[r]_{\alpha_1,\alpha_2}  & v\ar[r]_{\beta} &w 
}
\]
is the weighted graph $(E',\omega')$ one gets as described in the paragraph after Definition \ref{defn2}. Since the only connected component of $(E',\omega')$ is an oriented line graph, $(E,\omega)$ is reducible.
\end{example}

\begin{example}\label{exirred}
Let $(E,\omega)$ be the weighted graph below. 
\[
\xymatrix{&x\ar[d]^{\gamma}&y\ar[d]^{\delta_1,\delta_2}\\
u  \ar[r]_{\alpha_1,\alpha_2}  & v\ar[r]_{\beta} &w 
}
\]
Note that $\overline{E^{0}_\omega}=\{u,v,w,y\}$. Then 
\[
\xymatrix{&&y\ar[d]^{\delta_1,\delta_2}\\
u  \ar[r]_{\alpha_1,\alpha_2}  & v\ar[r]_{\beta} &w 
}
\]
is the weighted graph $(E',\omega')$ one gets as described in the paragraph after Definition \ref{defn2}. Since the only connected component of $(E',\omega')$ is neither a circle graph nor an oriented line graph, $(E,\omega)$ is irreducible.
\end{example}

\begin{definition}[{\sc The unweighted graph associated with $(E,\omega)$}]\label{defn3}
Let $(E,\omega)$ be a weighted graph. We construct a unweighted graph $F$ as follows. Let $F^0=E^0$, $F^1=\{e_{\alpha_i}|\alpha_i\in E^1\}$, $s(e_{\alpha_i})=s(\alpha)$ and $r(e_{\alpha_i})=r(\alpha)$ if $s(\alpha)\not\in\overline{E^{0}_\omega}$ and $s(e_{\alpha_i})=r(\alpha)$ and $r(e_{\alpha_i})=s(\alpha)$ if $s(\alpha)\in\overline{E^{0}_\omega}$. The graph $F$ is called {\it the unweighted graph associated with $E$.}
\end{definition}

Thus  $F$ has the same vertices as $E$, an edge $\alpha_i\in E^1$ is kept as it is 
if $s(\alpha)\not\in\overline{E^{0}_\omega}$ and it is reversed if $s(\alpha)\in\overline{E^{0}_\omega}$. 

\begin{example}
The unweighted $F$ associated with the reducible weighted graph $(E,\omega)$ from Example \ref{exred} is the graph 
\[
\xymatrix{&x\ar[d]^{e_\gamma}&y\ar[d]^{e_\delta}\\
u    & v\ar@/^1.2pc/[l]^{e_{\alpha_2}}\ar@/_1.2pc/[l]_{e_{\alpha_1}} &w. \ar[l]_{e_\beta}
}
\]
It follows from the next proposition that $L_R(E,\omega)\cong L_R(F)$.
\end{example}

\begin{proposition} \label{propn1}
If $(E,\omega)$ is reducible, then $L_R(E,\omega)\cong L_R(F)$, where $F$ is the unweighted graph associated with $(E,\omega)$.
\end{proposition}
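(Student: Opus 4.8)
The plan is to write down an explicit isomorphism $\phi\colon L_R(E,\omega)\to L_R(F)$ on generators and to check that it respects the defining relations, producing a two-sided inverse by the same recipe. Recall that in $F$ an edge $\alpha_i$ is kept if $s(\alpha)\notin\overline{E^{0}_\omega}$ and reversed if $s(\alpha)\in\overline{E^{0}_\omega}$. Accordingly I would set $\phi(v)=v$ for $v\in E^0$, and for $\alpha\in E^{\st}$ put $\phi(\alpha_i)=e_{\alpha_i}$, $\phi(\alpha_i^*)=e_{\alpha_i}^*$ when $s(\alpha)\notin\overline{E^{0}_\omega}$, and $\phi(\alpha_i)=e_{\alpha_i}^*$, $\phi(\alpha_i^*)=e_{\alpha_i}$ when $s(\alpha)\in\overline{E^{0}_\omega}$. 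To see that this extends to a ring homomorphism I would verify that the images satisfy relations (1)--(4) of Definition \ref{def3}.

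Before checking the relations I would isolate two consequences of reducibility. First, the weight forest $\overline{E^{0}_\omega}$ is closed downward along edges: since each $T(w)$ is closed under $\geq$, if $s(\alpha)\in\overline{E^{0}_\omega}$ then $r(\alpha)\in\overline{E^{0}_\omega}$. Hence every reversed edge of $F$ has both endpoints in the forest, so the edges of $F$ emanating from a vertex $v\notin\overline{E^{0}_\omega}$ are exactly the kept edges $e_{\alpha_i}$ with $s(\alpha)=v$; as such a $v$ is unweighted these all have $\omega(\alpha)=1$. Second, for $v\in\overline{E^{0}_\omega}$ the bound $|s^{-1}(v)|\leq 1$ means $v$ emits at most one structured edge $\alpha$, and the bound $|r^{-1}(v)\cap s^{-1}(\overline{E^{0}_\omega})|\leq 1$ means that at most one structured edge's reversal lands at $v$; thus the edges of $F$ emanating from $v$ come from a single structured edge.

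Relations (1) and (2) are immediate from the definition of $s,r$ in $F$ together with the reversal convention. The heart of the argument is that the reversal interchanges relations (3) and (4). For $v\notin\overline{E^{0}_\omega}$ both relations transport verbatim into the corresponding Cuntz--Krieger relations of the unweighted graph $F$. For $v\in\overline{E^{0}_\omega}$ with unique outgoing edge $\alpha$ of weight $n=\omega(v)$, relation (3), $\alpha_i\alpha_j^*=\delta_{ij}v$, becomes $e_{\alpha_i}^*e_{\alpha_j}=\delta_{ij}v$, which is exactly relation (4) for the edges $e_{\alpha_i},e_{\alpha_j}$ of $F$ (they share source $r(\alpha)$ and have range $v$); conversely relation (4), $\sum_{i=1}^n\alpha_i^*\alpha_i=r(\alpha)$, becomes $\sum_{i=1}^n e_{\alpha_i}e_{\alpha_i}^*=r(\alpha)$, which is relation (3) at the vertex $r(\alpha)$ of $F$. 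Here the second reducibility bound is what guarantees that $e_{\alpha_1},\dots,e_{\alpha_n}$ are all the edges of $F$ with source $r(\alpha)$, so that the Cuntz--Krieger sum over $F$ contains no extra terms. Moreover $s(\alpha)=s(\beta)\in\overline{E^{0}_\omega}$ collapses to $\alpha=\beta$, so only the diagonal case of relation (4) occurs.

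Defining $\psi\colon L_R(F)\to L_R(E,\omega)$ by the inverse recipe (undoing the reversal) and running the same relation check shows that $\psi$ is a homomorphism; since $\psi\phi$ and $\phi\psi$ fix the respective generating sets they are the identity, so $\phi$ is an isomorphism. I expect the main obstacle to be the bookkeeping in the previous paragraph: one must identify precisely the set of edges of $F$ issuing from each vertex and match the transported weighted relation against the correct unweighted Cuntz--Krieger relation, and both reducibility hypotheses (the downward closure of $\overline{E^{0}_\omega}$ and the two degree bounds) are used exactly to rule out spurious terms in these sums. A cleaner alternative would avoid checking the inverse by hand and instead use the basis of normal generalised paths from Theorem \ref{thm11} to argue that $\phi$ carries a basis to a basis, but the relation verification above seems unavoidable.
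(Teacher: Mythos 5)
Your proposal is correct and takes essentially the same route as the paper's proof: the same keep/reverse bijection on generators, the same use of the two reducibility bounds together with the downward closure of the weight forest $\overline{E^{0}_\omega}$ to show that relations (3) and (4) swap at forest vertices with no spurious terms, and an inverse given by the same recipe. The only cosmetic difference is that the paper carries out the verification at the level of the free rings, showing that the induced isomorphism $f\colon R\langle X\rangle\to R\langle X'\rangle$ satisfies $f(I)=I'$, rather than as a pair of mutually inverse homomorphisms of the quotient algebras.
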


\begin{proof} 
Let $X:=E^0\cup E^1\cup (E^1)^*$ and $X':=F^0\cup F^1\cup (F^1)^*$. Then the bijection $X\rightarrow X'$ mapping $v\mapsto v$ for any $v\in E^0$, $\alpha_i\mapsto e_{\alpha_i}$ and $\alpha_i^*\mapsto e^*_{\alpha_i}$ for any $\alpha_i\in E^1$ such that $s(\alpha)\not\in\overline{E^{0}_\omega}$ and $\alpha_i\mapsto e_{\alpha_i}^*$ and $\alpha_i^*\mapsto e_{\alpha_i}$ for any $\alpha_i\in E^1$ such that $s(\alpha)\in\overline{E^{0}_\omega}$ induces an isomorphism $f:R\X\rightarrow R\langle X'\rangle$. Let $I$ and $I'$ be ideals of $R\X$  and $R\langle X'\rangle$ generated by the relations (1)-(4) in Definition~\ref{def3}, respectively 
 (hence $L_R(E,\omega)=R\X/I$ and $L_R(F)=R\langle X'\rangle/I'$, see Example~\ref{exex1}). In order to show that $L_R(E,\omega)\cong L_R(F)$ it suffices to show that $f(I)=I'$. Set
\[A^{(1)}:=\big \{vw-\delta_{vw}v \ |  \ v,w\in E^0 \big \},\]
\[A^{(2)}:=\big \{s(\alpha)\alpha_i-\alpha_i, \alpha_ir(\alpha)-\alpha_i, r(\alpha)\alpha_i^*-\alpha_i^*, \alpha_i^*s(\alpha)-\alpha^*_i \ | \ \alpha\in E^{\st}, 1\leq i\leq \omega(\alpha)\big \},\]
and for any $v\in E^0$ which is not a sink 
\[A^{(3)}_v:=\Big\{\sum\limits_{\alpha\in s^{-1}(v)}\alpha_i\alpha_j^*-\delta_{ij}v \ |  \ 1\leq i,j\leq \omega(v)\Big\}\]
and
\[A^{(4)}_v:=\Big\{\sum\limits_{i=1}^{\max\{\omega(\alpha),\omega(\beta)\}}\alpha_i^{*}\beta_i-\delta_{\alpha\beta}r(\alpha)\ | \ \alpha,\beta\in s^{-1}(v)\Big\}.\] 
Then $I$ is generated by $A^{(1)}$, $A^{(2)}$, the $A^{(3)}_v$'s and the $A^{(4)}_v$'s. Define $B^{(1)},B^{(2)},B^{(3)}_v,B^{(4)}_v$ $\in R\langle X'\rangle$, where $v\in F$ is not a sink, analogously. Then $I'$ is generated by $B^{(1)}$, $B^{(2)}$, the $B^{(3)}_v$'s and the $B^{(4)}_v$'s. Clearly $f(A^{(1)})=B^{(1)}$ and $f(A^{(2)})=B^{(2)}$. Let $v\in E^0$ be not a sink. One checks easily that if $v\in E^0\setminus \overline{E^{0}_\omega}$, then $f(A_v^{(3)})=B_v^{(3)}$ and $f(A_v^{(4)})=B_v^{(4)}$. Now assume that $v\in\overline{E^{0}_\omega}$. Then $s^{-1}(v)=\{\alpha\}$ for some $\alpha\in E^{\st}$ since $v$ is not a sink and $(E,\omega)$ is reducible. Set $w:=r(\alpha)$. Then
\[A^{(3)}_v=\big \{\alpha_i\alpha_j^*-\delta_{ij}v  \  | \  1\leq i,j\leq \omega(\alpha) \big \}\]
and
\[A^{(4)}_v=\Big\{\sum\limits_{i=1}^{\omega(\alpha)}\alpha_i^{*}\alpha_i-w\Big\}.\] 
It is easy to show that $s^{-1}(w)=\{e_{\alpha_1},\dots,e_{\alpha_{\omega(\alpha)}}\}$ in $F$ (note that all edges which $w$ emits in $E$ get reversed since clearly $w\in \overline{E^{0}_\omega}$; further $r^{-1}(w)\cap s^{-1}(\overline{E^{0}_\omega})=\{\alpha\}$ in $E$ since $(E,\omega)$ is reducible). Hence 
\[B^{(3)}_w=\Big\{\sum\limits_{i=1}^{\omega(\alpha)}e_{\alpha_i}e_{\alpha_i}^*-w\Big\}\]
and
\[B^{(4)}_w=\big  \{e_{\alpha_i^{*}}e_{\alpha_j}-\delta_{ij}v\  | \  1\leq i,j\leq \omega(\alpha) \big  \}.\]
Clearly $f(A^{(3)}_v)=B^{(4)}_w$ and $f(A^{(4)}_w)=B^{(3)}_w$. It follows that $f(I)=I'$ (note that for any $w\in \overline{E^{0}_\omega}$ which is not a sink in $F$, there is a $v\in \overline{E^{0}_\omega}$ and an $\alpha\in E^{\st}$ such that $s(\alpha)=v$ and $r(\alpha)=w$). Thus $L_R(E,\omega)\cong L_R(F)$.
\end{proof}

One idea to show that $L_R(E,\omega)$ is not graded simple provided $(E,\omega)$ is irreducible, is to find nontrivial lr-normal generalised paths. We define them below. 

\begin{definition}[{\sc l-normal, r-normal, lr-normal generalised paths}]\label{deflrn}
An element $x\in X=E^0\cup E^1\cup (E^1)^*$ is called {\it l-normal}, iff there is no $y\in X$ such that $yx$ is of type I or II (see Definition \ref{deffne}). $x$ is called {\it r-normal}, iff there is no $y\in X$ such that $xy$ is of type I or II. $x$ is called {\it lr-normal}, iff it is l-normal and r-normal. More generally a normal gen. path $p=x_1...x_n$ is called {\it l-normal} if $x_1$ is l-normal, {\it r-normal} if $x_n$ is r-normal and {\it lr-normal} if it is l-normal and r-normal.
\end{definition}

Let $p$ be a nontrivial lr-normal gen. path and $J$ the ideal of $L_R(E,\omega)$ generated by (the image of) $p$. One checks easily that $\NF(J)$ is the linear span of the normal generalised paths containing $p$ as a subword (note that if $o$ and $q$ are nontrivial normal generalised paths such that $r(o)=s(p)$ and $s(q)=r(p)$, then $opq$ again is a normal generalised path). It follows from the uniqueness of the normal form (see Theorem~\ref{thm11}) that $J\neq L_R(E,\omega)$ (for instance $J$ does not contain any vertex). Hence nontrivial lr-normal gen. paths generate proper graded ideals.

\begin{lemma}\label{lemn1}
If there is a $v\in E^{0}$ such that $s^{-1}(v)$ contains two distinct weighted structured edges $\alpha,\beta$, then there is a nontrivial lr-normal gen. path. Hence $L_R(E,\omega)$ is not graded simple.
\end{lemma}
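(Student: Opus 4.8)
The plan is to exhibit one explicit nontrivial lr-normal generalised path; once such a path is in hand, the paragraph preceding the lemma immediately produces a proper graded ideal and hence the failure of graded simplicity, so all the work is in the construction. Since the distinguished edge $\alpha^v$ is a \emph{single} structured edge and $\alpha\neq\beta$, at least one of $\alpha,\beta$ is different from $\alpha^v$; call it $\gamma$. By hypothesis $\gamma$ is weighted, so $\omega(\gamma)\geq 2$ and the edge $\gamma_2\in E^1$ exists. I claim that the length-one generalised path $\gamma_2$ is lr-normal.

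First, $\gamma_2$ is a single letter, so it has no subword of length two; it is therefore (vacuously) a normal generalised path, and it is nontrivial. It remains to verify l- and r-normality, i.e. that $\gamma_2$ occurs neither as the right factor nor as the left factor of a word of type I or II.

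For l-normality I would argue as follows. The right factor of a type I word $\alpha^w_i(\alpha^w_j)^*$ is starred, whereas $\gamma_2$ is unstarred; and the right factor of a type II word $\delta^*_1\epsilon_1$ is a \emph{first} edge $\epsilon_1$, whereas $\gamma_2$ carries index $2$. Hence no $y\in X$ makes $y\gamma_2$ of type I or II. For r-normality, the left factor of a type II word is starred, which rules out type II, while the left factor of a type I word has the form $\alpha^w_i$ with $\alpha^w$ the special edge at the non-sink $w$; were $\gamma_2=\alpha^w_i$ we would have $\gamma=\alpha^w$, hence $w=s(\gamma)=v$, forcing $\gamma=\alpha^v$ against our choice. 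This single implication is the only step that uses the hypothesis $\gamma\neq\alpha^v$ together with the convention that the special edge at $w$ has source $w$, and it is the one place requiring care.

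Thus $\gamma_2$ is a nontrivial lr-normal generalised path, and the discussion immediately before the lemma shows it generates a proper graded ideal of $L_R(E,\omega)$; in particular $L_R(E,\omega)$ is not graded simple. The main (and essentially only) obstacle is the bookkeeping that distinguishes the four ways in which $\gamma_2$ could be absorbed into a type I or II word, together with the observation that being an edge of a special structured edge would force $\gamma=\alpha^v$; everything else is immediate once the candidate $\gamma_2$ is chosen.
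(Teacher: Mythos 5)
Your proof is correct and follows essentially the same route as the paper: both exhibit a single index-$2$ edge of a weighted structured edge at $v$ other than the special edge $\alpha^v$ as a nontrivial lr-normal generalised path, and then invoke the preceding paragraph to get a proper graded ideal. The only (harmless) difference is that the paper re-chooses $\alpha^v$ so that $\alpha^v\neq\beta$ (assuming $\omega(\alpha)\geq\omega(\beta)$ without loss of generality) and takes $\beta_2$, whereas you keep the already-fixed $\alpha^v$ and take $\gamma_2$ for whichever $\gamma\in\{\alpha,\beta\}$ differs from it; your case analysis of type I and type II absorptions is exactly the ``one checks easily'' step the paper leaves to the reader.
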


\begin{proof}
The normal form defined in Section~\ref{sec2} depends on the choice of elements $\alpha^v\in s^{-1}(v)~(v\text{ not a sink})$ such that $\omega(\alpha^v)$ is maximal (see~(\ref{specialedge})). Without loss of generality assume that $\omega(\alpha)\geq \omega(\beta)$. Then clearly one can choose $\alpha^v\neq\beta$. One checks easily that $\beta_2$ is lr-normal.
\end{proof}
An example of an irreducible weighted graph satisfying the condition of the previous lemma is the following weighted graph:
\[
\xymatrix{E:\!\!\!\!\!\!\!\!&\!\!\!\! u  \ar@/^1.2pc/[r]^{\alpha_1,\alpha_2}\ar@/_1.2pc/[r]_{\beta_1,\beta_2}  & v  
}.
\]
Unfortunately there are irreducible weighted graphs without nontrivial lr-normal gen. paths, for example
\[
\xymatrix{F:\!\!\!\!\!\!\!\!&\!\!\!\! u  \ar@/^1.2pc/[r]^{\alpha_1,\alpha_2}\ar@/_1.2pc/[r]_{\beta}  & v  
}.
\] 
One can use the Diamond Lemma to show that the ideal of  $L_R(F,\omega)$ generated by $\alpha_1$ is proper, see the proof of the next lemma.
\begin{lemma}\label{lemn2}
If there is a $v\in E^{0}$ such that $s^{-1}(v)$ contains a weighted structured edge $\alpha$ and an unweighted structured edge $\beta$, then $L_R(E,\omega)$ is not graded simple.
\end{lemma}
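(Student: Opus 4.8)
The plan is to produce a proper nonzero graded ideal of $L_R(E,\omega)$, which is exactly what is needed to rule out graded simplicity. If $s^{-1}(v)$ contains two distinct weighted structured edges, then Lemma~\ref{lemn1} already applies; so I may assume that $\alpha$ is the only weighted structured edge emitted by $v$ and that every other edge in $s^{-1}(v)$, in particular $\beta$, is unweighted. Since $\omega(\alpha)=\omega(v)$ is maximal among the weights at $v$, I choose the special edge to be $\alpha^v=\alpha$, so that the normal form of Theorem~\ref{thm11} is set up using $\alpha$. Let $J$ be the two-sided ideal generated by $\alpha_1$. As $\alpha_1$ is homogeneous of degree $e_1$, the ideal $J$ is graded, and since $\alpha_1$ is a nonzero basis element by Theorem~\ref{thm11}, we have $J\neq\{0\}$. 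Everything then reduces to showing that $J$ is proper.

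First I would note that $J$ is $*$-closed. Relation (3) with $i=j=1$ reads $v=\sum_{\gamma\in s^{-1}(v)}\gamma_1\gamma_1^*$, while relation (4) applied to $\alpha$ and an unweighted $\gamma\neq\alpha$ gives $\alpha_1^*\gamma_1=0$, the higher terms vanishing because $\gamma$ is unweighted. Hence
\[\alpha_1^*=\alpha_1^*v=\sum_{\gamma\in s^{-1}(v)}(\alpha_1^*\gamma_1)\gamma_1^*=(\alpha_1^*\alpha_1)\alpha_1^*\in J,\]
so both $\alpha_1$ and $\alpha_1^*$ vanish in the quotient $A:=L_R(E,\omega)/J$. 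It therefore suffices to prove $A\neq\{0\}$; concretely, if the trivial path $v$ survives as a nonzero element of $A$, then $v\notin J$ and $J$ is proper.

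To analyse $A$ I would apply Bergman's diamond lemma (Theorem~\ref{thm10}) to the presentation of $A$ over the reduced alphabet $X\setminus\{\alpha_1,\alpha_1^*\}$. Putting $\alpha_1=\alpha_1^*=0$ makes every reduction of (1)--(4) involving $\alpha_1$ or $\alpha_1^*$ trivial, except the $(1,1)$ instance of relation (3) and the $\alpha_1^*\alpha_1$ instance of relation (4), which collapse to the forced relations
\[\sum_{\gamma\in s^{-1}(v),\,\gamma\neq\alpha}\gamma_1\gamma_1^*=v,\qquad \sum_{i=2}^{\omega(\alpha)}\alpha_i^*\alpha_i=r(\alpha).\]
I would then build a reduction system for $A$ from the surviving relations (1)--(4) together with these two, orienting the latter so that $\beta_1\beta_1^*$ and $\alpha_2^*\alpha_2$ are their left-hand sides; thus $\beta$ takes over the role of special edge in the $(1,1)$ relation while $\alpha$ retains it for the higher indices. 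Working with a variant of the semigroup ordering of Theorem~\ref{thm11} (for which the descending chain condition still holds), I would verify that all ambiguities resolve; Theorem~\ref{thm10} then yields a normal-form basis for $A$. The trivial path $v$ is not the left-hand side of any reduction, hence is irreducible and nonzero in $A$, so $v\notin J$ and $J$ is the desired proper nonzero graded ideal. (In any concrete case one can instead just write down a graded homomorphism onto a nonzero ring that kills $\alpha_1$ and check the defining relations directly --- for the graph $F$ above this target is $M_2\big(R[t,t^{-1}]\big)$ --- but the diamond-lemma argument is what treats an arbitrary ambient graph uniformly.)

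The hard part will be the confluence check for this hybrid system. The second forced relation behaves like a type~II relation whose summation index starts at $2$, and its overlaps with the type~I relation at $\beta$ are precisely the weighted analogue of the delicate $(4')$--$(5')$ ambiguities $\alpha^v_i(\alpha^v_1)^*\beta_1$ and $\alpha^*_1\alpha^v_1(\alpha^v_j)^*$ resolved in the proof of Theorem~\ref{thm11}. Checking that these, together with the overlaps of the two forced relations against the ordinary relations (2)--(4), all resolve is where the genuine work lies; the remaining ambiguities are routine bookkeeping.
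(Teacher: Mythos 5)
Your proposal is correct and follows essentially the same route as the paper: the paper likewise reduces via Lemma~\ref{lemn1} to the case where $\alpha$ is the unique weighted edge at $v$, chooses $\alpha^v=\alpha$, takes $J$ to be the ideal generated by $\alpha_1$, and applies Bergman's diamond lemma to the quotient using the extra reductions $\alpha_1=0$, $\alpha_1^*=0$, $\beta_1\beta_1^*=v-\sum_{\gamma\in s^{-1}(v),\,\gamma\neq\alpha,\beta}\gamma_1\gamma_1^*$ and $\alpha_2^*\alpha_2=r(\alpha)-\sum_{i=3}^{\omega(\alpha)}\alpha_i^*\alpha_i$, which are exactly your two re-oriented ``forced'' relations. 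The differences are cosmetic --- you delete $\alpha_1,\alpha_1^*$ from the alphabet instead of adding zero-reductions and you spell out why $\alpha_1^*\in J$ --- and the ordering ``variant'' you leave unspecified is realized in the paper by adding a tiebreaker that counts occurrences of the new left-hand sides (its ``type III'' words).
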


\begin{proof}
By the previous lemma we can assume that $\alpha$ is the only weighted structured edge in $s^{-1}(v)$. Hence we can choose $\alpha^v=\alpha$. Let $J$ be the ideal of $L_R(E,\omega)=R\X/I$ generated by $\alpha_1+I$. Using Bergman's machinery (see~\S\ref{sec2}) we will show that $L_R(E,\omega)/J$ is not trivial which implies that $J\neq L_R(E,\omega)$.\\
It is easy to show that $L_R(E,\omega)/J$ is isomorphic to the quotient $R\X/I'$ where $I'$ is the ideal of $R\X$ generated by the relations (1)-(4) in Definition~\ref{def3} and the relation 
\begin{enumerate}[(5)]
\item $\alpha_1=0$.
\end{enumerate} 
We call the words $\alpha_1, \alpha_1^*, \beta_1\beta_1^*,\alpha_2^*\alpha_2\in \X$ {\it words of type III}. Further we call a generalised path {\it strongly normal} if it is normal and does not contain a subword of type III. An element of $R\X$ is called {\it strongly normal} if it lies in the linear span $R\X_{\SN}$ of all strongly normal generalised paths. Using Theorem~\ref{thm10} we will show that $R\X_{\SN}$ is a set of representatives for the elements of  $R\X/I'$.\\
In order to be able to apply Theorem~\ref{thm10} we replace the relations (1)-(5) by the relations (1')-(5') in the proof of 
Theorem~\ref{thm11} and the relations 
\begin{enumerate}[(1')]
\setcounter{enumi}{5}
\item $\alpha_1=0$, $\alpha_1^*=0$,

\medskip

\item $\beta_1\beta_1^*=v-\sum\limits_{\substack{\gamma\in s^{-1}(v)\\\gamma\neq\alpha,\beta}}\gamma_1\gamma_1^*$ and

\medskip

\item $\alpha_2^*\alpha_2=r(\alpha)-\sum\limits_{i=3}^{\omega(\alpha)}\alpha_i^*\alpha_i$.
\end{enumerate}
Clearly the relations (1')-(8') generate the same ideal $J$ of $R\X$ as the relations (1)-(5). Denote by $S$ the reduction system for $R\X$ defined by the relations (1')-(8') (i.e., $S$ is the set of all pairs $\sigma=(W_\sigma,f_\sigma)$ where $W_\sigma$ equals the left hand side of an equation in (1')-(8') and $f_\sigma$ the corresponding right hand side). For any $A=x_1\dots x_n\in \X$ set $l(A):=n$, 
\[m_{I,II}(A):= |\{i\in\{1,\dots,n-1\}\  |\  x_ix_{i+1} \text{ is of type I or II}\}|\]
and
\begin{align*}m_{III}(A):= |\{i\in\{1,\dots,n\} \ |\ &\text{either }x_i \text{ is of type III}\\&\text{or } i\leq n-1 \text{ and }x_ix_{i+1} \text{ is of type III}\}|.
\end{align*}
Define a partial ordering $\leq$ on $\X$ by 
\begin{align*}
&A\leq B\\
\Leftrightarrow~& [A=B]~\lor~[l(A)<l(B)]~\lor\\&[l(A)=l(B)~\land~ \forall C,D\in \overline\X:m_{I,II}(CAD)<m_{I,II}(CBD)]~\lor\\& [l(A)=l(B)~\land~ \forall C,D\in \overline\X:m_{I,II}(CAD)\leq m_{I,II}(CBD)~\land\\
&\forall C,D\in \overline\X:m_{III}(CAD)<m_{III}(CBD)].
\end{align*} 
Clearly $\leq$ is a semigroup partial ordering on $\X$ compatible with $S$ and the descending chain condition is satisfied. Further it is easy to show that all ambiguities of $S$ are resolvable. For example
\xymatrixcolsep{3pc}
\xymatrixrowsep{3pc}
\[\xymatrix{
&\alpha_2^*\alpha_2\alpha_2^*\ar[rd]^-{(8')}\ar[ld]_-{(4')}&
\\
\alpha_2^*v\ar[rd]_-{(2')}
&&
\hspace{-1.3cm}\big(r(\alpha)-\sum\limits_{i=3}^{\omega(\alpha)}\alpha_i^*\alpha_i\big)\alpha_2^*\ar[ld]^-{(2'),(4')}
\\
&\alpha_2^*.&}
\]
It follows from Theorem~\ref{thm10}, that $R\X_{\irr}$ is a set of representatives for the elements of $R\X/I'$. But clearly $R\X_{\irr}=R\X_{\SN}$.
It follows that $R\X/I'$ has more than one element since $R\X_{\SN}$ has more than one element (for example $v,\alpha_2,\alpha_2^*,\beta_1,\beta_1^*\in R\X_{\SN}$). Since $L_R(E,\omega)/J\cong R\X/I'$, it follows that $L_R(E,\omega)/J$ has more than one element and hence $J\neq L_R(E,\omega)$. Thus $J$ is a proper graded ideal.
\end{proof}

We can now use Lemma~\ref{lemn1} and Lemma~\ref{lemn2} to prove that if $(E,\omega)$ is irreducible, then $L_R(E,\omega)$ is not graded simple (note that there are irreducible weighted graphs which neither satisfy the condition of Lemma~\ref{lemn1} nor the condition of Lemma~\ref{lemn2}, e.g. the weighted graph $(E,\omega)$ from Example \ref{exirred}).
\begin{proposition}\label{propn2}
If $(E,\omega)$ is irreducible, then $L_R(E,\omega)$ is not graded simple.
\end{proposition}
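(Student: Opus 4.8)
The plan is to reduce the whole statement to the single task of exhibiting one nontrivial lr-normal generalised path: by the observation following Definition~\ref{deflrn}, any such path generates a proper graded ideal, so its existence already shows $L_R(E,\omega)$ is not graded simple. First I would clear away the cases already handled. If some vertex emits two distinct weighted structured edges we are done by Lemma~\ref{lemn1}, and if some vertex emits a weighted and an unweighted structured edge we are done by Lemma~\ref{lemn2}. Hence I may assume the \emph{clean case}: for every $u\in E^0$ the set $s^{-1}(u)$ either consists of a single weighted edge (necessarily the distinguished edge $\alpha^{u}$) or consists entirely of unweighted edges. In particular each weighted vertex emits exactly one edge.

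Next I would extract a concrete defect from irreducibility. Unwinding Definition~\ref{defn2}, irreducibility of $(E,\omega)$ means that some $v\in\overline{E^{0}_\omega}$ satisfies $|s^{-1}(v)|\ge 2$ or $|r^{-1}(v)\cap s^{-1}(\overline{E^{0}_\omega})|\ge 2$ (the out- and in-degrees of $v$ in the reduced graph of the paragraph after Definition~\ref{defn2}, using that the weight forest is closed under $\ge$). This yields two situations. In case (R1), $|s^{-1}(v)|\ge 2$; in the clean case $v$ then emits only unweighted edges, so choosing $\alpha^{v}$ among them there is an unweighted $\nu\in s^{-1}(v)$ with $\nu\ne\alpha^{v}$. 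In case (R2) there are two structured edges $\mu,\nu$ with $r(\mu)=r(\nu)=v$ and distinct sources $a,b\in\overline{E^{0}_\omega}$ (if they shared a source we would be back in (R1) at that source).

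The construction then rests on three facts, each read off directly from the definitions of type~I and type~II: a letter $\gamma_i$ with $\gamma$ weighted and $i\ge 2$ is l-normal; a letter $\gamma_i^{*}$ with $\gamma$ weighted and $i\ge 2$ is r-normal; and $\nu_1$ is r-normal whenever $\nu\ne\alpha^{s(\nu)}$. Since $v\in\overline{E^{0}_\omega}$ there is a weighted vertex $w$ with a path $w\to\cdots\to v$, whose first edge must be $\alpha^{w}$ (weight $\ge 2$). For (R1) I would take the forward path
\[
\rho=\alpha^{w}_2\,g^{2}_1\cdots g^{l}_1\,\nu_1,
\]
i.e. this path from $w$ to $v$ (with $g^{1}=\alpha^{w}$) given first letter of index $2$ and the remaining letters index $1$, followed by $\nu_1$. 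As a product of unstarred letters it contains no subword of type~I or~II and so is normal; its first letter $\alpha^{w}_2$ is l-normal and its last letter $\nu_1$ is r-normal, so $\rho$ is a nontrivial lr-normal path. For (R2) I would pick weighted vertices $w_a,w_b$ reaching $a,b$ and form the path $\rho$ that runs forward (all letters unstarred) from $w_a$ along $w_a\to\cdots\to a\xrightarrow{\mu}v$ and then backward (all letters starred) from $v$ along $v\xrightarrow{\nu^{*}}b\to\cdots\to w_b$, giving the two extreme letters $\alpha^{w_a}_2$ and $(\alpha^{w_b})^{*}_2$ index $2$ and all interior letters index $1$. Normality must be checked only at the single forward-to-backward junction $\mu_{\bullet}\nu_1^{*}$: it is not of type~II (which is starred then unstarred), and since $\mu\ne\nu$ it is not of the form $\alpha^{v}_i(\alpha^{v}_j)^{*}$, hence not of type~I; again the extreme letters are l- and r-normal, so $\rho$ is lr-normal.

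I expect case (R2) to be the main obstacle. There one has to manufacture weighted letters at \emph{both} ends by walking back to weighted vertices — possible exactly because $a,b$ lie in the weight forest — then choose all indices so that the two outer letters carry index $2$ while every interior letter remains valid, and finally verify that the lone junction $\mu_{\bullet}\nu_1^{*}$ is normal, which is precisely where the hypothesis $\mu\ne\nu$ (the in-degree-$2$ defect not covered by Lemmas~\ref{lemn1}--\ref{lemn2}) is used. The remaining bookkeeping — that $\neg$(Lemma~\ref{lemn1}) $\wedge$ $\neg$(Lemma~\ref{lemn2}) forces the clean-case dichotomy, and that irreducibility must surface as (R1) or (R2) — is routine once the degree reformulation of Definition~\ref{defn2} is in hand.
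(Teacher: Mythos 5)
Your proposal is correct and is essentially the paper's own proof: after the same reduction via Lemmas~\ref{lemn1} and~\ref{lemn2} to the case where every weighted vertex emits exactly one structured edge, the paper also splits irreducibility into the out-degree case $|s^{-1}(v)|>1$ and the in-degree case $|r^{-1}(v)\cap s^{-1}(\overline{E^{0}_\omega})|>1$, and builds the very same lr-normal paths ($\alpha_2p'\beta_1$, respectively $\gamma_2p_1'\alpha_1\beta_1^*(p_2')^*\epsilon_2^*$) by walking from a weighted vertex with an index-$2$ extreme letter. The only cosmetic differences are that you force interior indices to $1$, reduce the in-degree case to distinct sources, and treat the trivial-path degenerations uniformly where the paper says they are ``handled analogously.''
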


\begin{proof} 
Since $(E,\omega)$ is irreducible, there is a $v\in \overline{E^{0}_\omega}$ such that $|s^{-1}(v)|>1$ or $|r^{-1}(v)\cap s^{-1}(\overline{E^{0}_\omega})|>1$.\\

\begin{enumerate}

\item[Case 1] {\it Assume that $|s^{-1}(v)|>1$.}\\
Since $v\in\overline{E^{0}_\omega}$, there is a $u\in E^{0}_\omega$ and a path $p$ such that $s(p)=u$ and $r(p)=v$. 
By Lemma~\ref{lemn1} and Lemma~\ref{lemn2}, we may assume that $s^{-1}(u)=\{\alpha\}$ for some $\alpha\in E^{\st}_\omega$. It follows that $p$ is nontrivial and $p=\alpha_ip'$ for some $1\leq i\leq \omega(\alpha)$ and a path $p'$ (here we allow $p'$ to be the empty word). Choose a $\beta\in s^{-1}(v)$ such that $\beta\neq \beta^v$ (this is possible as $|s^{-1}(v)|>1$). One checks easily that $\alpha_2p'\beta_1$ is lr-normal.

\medskip 

\item[Case 2] {\it Assume $|r^{-1}(v)\cap s^{-1}(\overline{E^{0}_\omega})|>1$.}\\
Since $|r^{-1}(v)\cap s^{-1}(\overline{E^{0}_\omega})|>1$, there are $u_1,u_2\in \overline{E^{0}_\omega}$ and distinct $\alpha,\beta\in E^{\st}$ such that $s(\alpha)=u_1$, $s(\beta)=u_2$ and $r(\alpha)=r(\beta)=v$. Since $u_1,u_2\in\overline{E^{0}_\omega}$, there are $w_1, w_2\in E^{0}_\omega$ and paths $p_1$ and $p_2$ such that $s(p_1)=w_1$, $r(p_1)=u_1$, $s(p_2)=w_2$ and $r(p_2)=u_2$. By Lemma~\ref{lemn1} and Lemma~\ref{lemn2}, we may assume that $s^{-1}(w_1)=\{\gamma\}$ and $s^{-1}(w_1)=\{\epsilon\}$ for some $\gamma, \epsilon\in E^{\st}_\omega$. Assume that $p_1$ and $p_2$ are nontrivial. Then $p_1=\gamma_ip_1'$ and $p_2=\epsilon_jp_2'$ for some $1\leq i\leq \omega(\gamma)$, $1\leq j\leq \omega(\epsilon)$ and paths $p'_1$ and $p'_1$ (here we allow $p'_1$ and $p'_2$ to be the empty word). One checks easily that $\gamma_2p'_1\alpha_1\beta_1^*(p'_2)^*\epsilon_2^*$ is lr-normal. The case that $p_1$ or $p_2$ is trivial can be handled analogously. \qedhere
\end{enumerate}
\end{proof}

\begin{example}
Let $(E,\omega)$ be the irreducible weighted graph from Example \ref{exirred}. One checks easily that $\alpha_2\beta_1\delta_2^*$ is lr-normal. Hence $L_R(E,\omega)$ is not graded simple.
\end{example}

We are ready to classify the simple weighted Leavitt path algebras. 

\begin{theorem}[{\sc Simplicity Theorem}]\label{thmn1}
The weighted Leavitt path algebra $L_R(E,\omega)$ is simple if and only if $(E,\omega)$ is reducible and $L_R(F)$ is simple where $F$ is the unweighted graph associated with $(E,\omega)$.
\end{theorem}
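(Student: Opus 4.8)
The plan is to obtain the theorem as a formal consequence of Proposition~\ref{propn1} and Proposition~\ref{propn2}, in which the substantive work has already been carried out; the only additional ingredient is the elementary observation that a simple ring is graded simple.

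I would begin with the sufficiency direction, which is immediate. Assume $(E,\omega)$ is reducible and that $L_R(F)$ is simple. By Proposition~\ref{propn1} there is a ring isomorphism $L_R(E,\omega)\cong L_R(F)$, and since simplicity is preserved under isomorphism, $L_R(E,\omega)$ is simple.

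For the necessity direction, assume $L_R(E,\omega)$ is simple. First I would dispose of the degenerate case $E^{0}_\omega=\emptyset$: here $\omega(v)=1$ for every $v\in E^0$, so $(E,\omega)$ is unweighted, the associated graph $F$ coincides with $E$, and $L_R(E,\omega)=L_R(F)$ is simple; such a graph is to be counted as (vacuously) reducible. So I may assume $E^{0}_\omega\neq\emptyset$, where the reducible/irreducible dichotomy of Definition~\ref{defn2} applies. Since every graded ideal is in particular an ideal, a simple ring admits no proper graded ideal, so $L_R(E,\omega)$ is graded simple. Invoking the contrapositive of Proposition~\ref{propn2}, graded simplicity forces $(E,\omega)$ to be reducible. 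Finally, Proposition~\ref{propn1} yields $L_R(E,\omega)\cong L_R(F)$, and as the left-hand side is simple, so is $L_R(F)$.

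The only point requiring care is the treatment of the unweighted case $E^{0}_\omega=\emptyset$, which lies formally outside the scope of Definition~\ref{defn2} (that definition presupposes $E^{0}_\omega\neq\emptyset$); I would flag explicitly that an unweighted graph is read as reducible with $F=E$. Beyond this bookkeeping no genuine obstacle arises: once the implication ``simple $\Rightarrow$ graded simple'' is recorded, the theorem follows by chaining the two propositions.
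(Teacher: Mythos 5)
Your proposal is correct and follows essentially the same route as the paper, whose entire proof is the one-line citation of Proposition~\ref{propn1} and Proposition~\ref{propn2}; you have simply made explicit the chaining (sufficiency via the isomorphism of Proposition~\ref{propn1}, necessity via ``simple $\Rightarrow$ graded simple'' and the contrapositive of Proposition~\ref{propn2}) that the paper leaves implicit. Your flagging of the unweighted case $E^{0}_\omega=\emptyset$, where Definition~\ref{defn2} formally does not apply and one reads $(E,\omega)$ as reducible with $F=E$, is a reasonable piece of bookkeeping that the paper glosses over.
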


\begin{proof}
Follows from Proposition~\ref{propn1} and Proposition~\ref{propn2}.
\end{proof}

Theorem~\ref{thmn1} shows that although weighted Leavitt path algebras produce a wide range of algebras which are not covered by Leavitt path algebras (such as $L(n,n+k)$, $n\geq 2$), the class of simple weighted algebras doesn't produce new examples. 

We can prove a graded version of Theorem \ref{thmn1} in the case that $R$ is a field. Note that $L_R(E,\omega)$ is $\mathbb Z^n$-graded where $n=\max\{\omega(\alpha)~|~\alpha\in E^{st}\}$ while $L_R(F)$ is $\mathbb Z$-graded.

\begin{theorem}[{\sc Graded Simplicity Theorem}]\label{thmn2}
If $R$ is a field, then the weighted Leavitt path algebra $L_R(E,\omega)$ is graded simple if and only if $(E,\omega)$ is reducible and $L_R(F)$ is graded simple where $F$ is the unweighted graph associated with $(E,\omega)$.
\end{theorem}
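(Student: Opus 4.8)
The plan is to prove both implications by playing the (non-graded) isomorphism $f\colon L_R(E,\omega)\xrightarrow{\ \sim\ }L_R(F)$ of Proposition~\ref{propn1} against the known description of graded ideals of Leavitt path algebras. The essential point to keep in mind throughout is that $f$ fixes every vertex but is \emph{not} a graded map: $L_R(E,\omega)$ is $\mathbb{Z}^n$-graded while $L_R(F)$ is only $\mathbb{Z}$-graded, and $f$ sends an edge $\alpha_i$ with $s(\alpha)\in\overline{E^{0}_\omega}$ to a \emph{ghost} edge. Hence ordinary simplicity transfers across $f$ for free (as in Theorem~\ref{thmn1}), whereas graded simplicity does not, and the whole content of the theorem is the reconciliation of the two gradings. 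I will use three inputs: Proposition~\ref{propn2} (irreducible graphs are never graded simple); the fact that $f$ is a ring isomorphism fixing $E^0=F^0$; and the classical structure theory of row-finite Leavitt path algebras over a field, namely that the $\mathbb{Z}$-graded ideals of $L_R(F)$ are exactly the ideals $I_F(H)$ generated by a hereditary saturated set $H\subseteq F^0$, so that every nonzero graded ideal of $L_R(F)$ contains a vertex and $L_R(F)$ is graded simple precisely when $F$ has no nontrivial hereditary saturated subset.

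For the forward implication I argue by contraposition. If $(E,\omega)$ is irreducible, Proposition~\ref{propn2} already gives that $L_R(E,\omega)$ is not graded simple. So suppose $(E,\omega)$ is reducible but $L_R(F)$ is \emph{not} graded simple; then there is a nontrivial hereditary saturated $H\subsetneq F^0$. Let $I_E(H)$ be the ideal of $L_R(E,\omega)$ generated by $\{v\mid v\in H\}$. Since each vertex is homogeneous of degree $0$ in the $\mathbb{Z}^n$-grading, $I_E(H)$ is a $\mathbb{Z}^n$-graded ideal; and because $f$ fixes vertices we have $f(I_E(H))=I_F(H)$, which is a proper nonzero ideal, so $I_E(H)$ is proper and nonzero as well. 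Thus $L_R(E,\omega)$ is not graded simple. This shows that graded simplicity of $L_R(E,\omega)$ forces both reducibility and graded simplicity of $L_R(F)$.

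For the converse assume $(E,\omega)$ is reducible and $L_R(F)$ is graded simple. Everything reduces to the following \emph{key step}: every nonzero $\mathbb{Z}^n$-graded ideal of $L_R(E,\omega)$ contains a vertex. Granting this, let $J$ be a nonzero $\mathbb{Z}^n$-graded ideal; it contains some $v\in E^0$, hence contains the vertex ideal $I_E(\{v\})$. Applying $f$ gives $I_F(\{v\})=f(I_E(\{v\}))\subseteq f(J)$, and $I_F(\{v\})$ is a nonzero $\mathbb{Z}$-graded ideal of $L_R(F)$; graded simplicity of $L_R(F)$ forces $I_F(\{v\})=L_R(F)$, whence $I_E(\{v\})=L_R(E,\omega)$ and therefore $J=L_R(E,\omega)$. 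Thus $L_R(E,\omega)$ has no proper nonzero $\mathbb{Z}^n$-graded ideal, i.e.\ it is graded simple.

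It remains to establish the key step, and this is where I expect the real difficulty to lie; note that reducibility is indispensable, since in the irreducible case the lr-normal generalised paths produced in Proposition~\ref{propn2} generate proper graded ideals containing \emph{no} vertex. I would prove it by a normal-form contraction in the spirit of the graded-uniqueness theorem for Leavitt path algebras, using the basis of normal generalised paths from Theorem~\ref{thm11} and the hypothesis that $R$ is a field. Starting from a nonzero $\mathbb{Z}^n$-homogeneous $x\in J$, one multiplies on the left and right by the (degree-zero) vertex idempotents to assume that every normal path occurring in $x$ has one fixed source $u$ and one fixed range $w$, and then contracts $x$ by multiplying by suitable edges and ghost edges until a nonzero scalar multiple of a vertex is reached; dividing out this scalar is where the field hypothesis enters. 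Reducibility is what makes these contractions tractable: for a vertex $v\in\overline{E^{0}_\omega}$ the set $s^{-1}(v)$ is a singleton, so relation (3) of Definition~\ref{def3} collapses to the matrix-unit identities $\alpha_i\alpha_j^*=\delta_{ij}v$ with no summation, exactly as in an unweighted Leavitt path algebra, while off the weight forest the generators are genuinely unweighted. Consequently the contraction behaves just as it would in $L_R(F)$, even though $f$ itself is not graded. Carrying out this contraction carefully for the weighted relations, and checking that the reductions preserve membership in the graded ideal $J$, is the main obstacle.
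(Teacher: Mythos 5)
Your forward implication is correct and is essentially the paper's own argument: irreducibility is ruled out by Proposition~\ref{propn2}, and a nontrivial hereditary saturated $H\subseteq F^0$ pulls back along the vertex-fixing isomorphism $f$ of Proposition~\ref{propn1} to a proper nonzero ideal of $L_R(E,\omega)$ generated by vertices, hence $\mathbb{Z}^n$-graded. The converse, however, has a genuine gap. Everything in it rests on your ``key step'' --- every nonzero $\mathbb{Z}^n$-graded ideal of $L_R(E,\omega)$ contains a vertex --- and you do not prove it; you sketch a ``normal-form contraction'' and concede that carrying it out is the main obstacle. But under your standing hypotheses (reducible, $L_R(F)$ graded simple) that key step is \emph{equivalent} to the conclusion of the theorem, so what you have written assumes what is to be proved. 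Moreover the contraction you appeal to is the graded-uniqueness argument for unweighted Leavitt path algebras, which uses the CK-relations to shrink a homogeneous element down to a vertex; in the weighted setting relations (3) and (4) of Definition~\ref{def3} are sums in which the index runs over edges in two different ways, normal generalised paths interleave real and ghost edges arbitrarily, and no analogue of this contraction machinery is established in the paper (the paper develops no structure theory at all for graded ideals of weighted Leavitt path algebras). There is no reason to believe this step is routine, and it is not.

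The paper's proof of the converse sidesteps your key step entirely, and the route is worth noting. Since $R$ is a field and (by graded simplicity of $L_R(F)$ together with \cite[Theorem~2.5.8]{abrams-ara-molina}) the only hereditary saturated subsets of $F^0$ are $\emptyset$ and $F^0$, the classification of \emph{all} ideals, \cite[Theorem~2.8.10]{abrams-ara-molina}, applies: every proper nonzero ideal $J$ of $L_R(F)$ is generated by elements $v+r_1c+\dots+r_mc^m$ with $c$ a cycle without exit based at $v$, $m\geq 1$, $r_m\neq 0$. Pulling back, $f^{-1}(v)$ and the $f^{-1}(r_ic^i)$ are homogeneous elements of $L_R(E,\omega)$ of pairwise distinct $\mathbb{Z}^n$-degrees, so if $f^{-1}(J)$ were graded, each of $v,r_1c,\dots,r_mc^m$ would lie in $J$; then $J$ would be generated by homogeneous elements of $L_R(F)$, i.e.\ $J$ would be a proper nonzero \emph{graded} ideal of $L_R(F)$, contradicting graded simplicity. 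Hence no proper nonzero ideal of $L_R(E,\omega)$ is graded, which is the claim. Note that this is also where the field hypothesis genuinely enters --- through the ideal classification in $L_R(F)$ --- rather than where you placed it (dividing by a scalar at the end of a contraction).
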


\begin{proof}

($\Rightarrow$) Assume that $L_R(E,\omega)$ is graded simple. Then $(E,\omega)$ is reducible by Proposition~\ref{propn2} and hence $L_R(E,\omega)\cong L_R(F)$ by Proposition~\ref{propn1}. Assume that $L_R(F)$ contains a proper graded ideal $J$. Then $J$ is generated by elements of $F^0$ by \cite[Theorem 2.5.8]{abrams-ara-molina} (namely $J$ is generated by a hereditary and saturated subset of $F^0$). But the isomorphism between $L_R(E,\omega)$ and $L_R(F)$ established in Proposition~\ref{propn1} maps $E^0$ onto $F^0$. Therefore the image of $J$ in $L_R(E,\omega)$ is generated by elements of $E^0$ and therefore it is a proper graded ideal of $L_R(E,\omega)$. But this contradicts the assumption that $L_R(E,\omega)$ is graded simple. Thus $L_R(F)$ is graded simple.

($\Leftarrow$)
Assume that $L_R(F)$ is graded simple and $(E,\omega)$ is reducible. Let $\phi:L_R(E,\omega)\rightarrow L_R(F)$ be the isomorphism induced by the map $f:R\X\rightarrow R\langle X'\rangle$ defined in Proposition~\ref{propn1}. The only hereditary and saturated subsets of $F^0$ are $\emptyset$ and $F^0$ by  \cite[Theorem 2.5.8]{abrams-ara-molina}. It follows from 
\cite[Theorem 2.8.10]{abrams-ara-molina}  that every proper ideal $J$ of $L_R(F)$ is generated by terms of the form $v+r_1c+\dots+r_mc^m$ where $m\geq 1$, $r_1,\dots,r_m\in R$, $r_m\neq 0$ and $c$ is a cycle based at $v$ without exit. But the elements $f^{-1}(v), f^{-1}(r_1c),\dots,f^{-1}(r_mc^m)$ are homogeneous in $L_R(E,\omega)$. Assume that $f^{-1}(J)$ is a graded ideal of $L_R(E,\omega)$, then, by the definition of a graded ideal, all the elements $f^{-1}(v), f^{-1}(r_1c),\dots,f^{-1}(r_mc^m)$ are contained in $f^{-1}(J)$ and hence all the elements $v, r_1c,\dots,r_mc^m$ are contained in $J$. But this implies that $J$ is graded which is a contradiction. Thus $L_R(E,\omega)$ is graded simple.
\end{proof}

\section{LV-algebras and classification of weighted Leavitt path algebras which are domains}\label{sec7}

In \cite{vitt56} Leavitt proved that the algebra $L_\mathbb{Z}(2,3)$ is a domain. Namely he defined normal forms for the elements of $L_\mathbb{Z}(2,3)$ and showed that the map $\nu$ which associates to each $x\in L_\mathbb{Z}(2,3)$ the degree of $\NF(x)$ as a polynomial in the generators of $L_\mathbb{Z}(2,3)$ (with the convention $\nu(0)=-\infty$) is a valuation, i.e. $\nu(xy)=\nu(x)+\nu(y)$. It follows that if $0=xy$, then $-\infty=\nu(0)=\nu(xy)=\nu(x)+\nu(y)$ and hence $x$ or $y$ must be $0$. Later Cohn \cite{cohn66} proved, using the same method, that the Leavitt algebras $L_K(n,n+k)$, $n\geq 2$ are domains if $K$ is a field.

Here we adapt Leavitt's approach to study certain weighted Leavitt path algebras. Clearly there is no valuation on $L_R(E,\omega)$ if there is more than one vertex (since in this case there are zero divisors). But for a large class of weighted Leavitt path algebras, so-called LV-algebras (Definition~\ref{lvalgebra3}),  one can define a ``local'' valuation. Using the local valuation we prove that LV-algebras are prime, semiprimitive and non-singular, similar to the case of Leavitt path algebras. However they are not (graded) von Neumann regular. Thus we obtain a much larger class of prime and nonsingular rings than Leavitt path algebras. 

\begin{definition}[\sc Support of an element of $L_R(E,\omega)$]
Let $(E,\omega)$ be a weighted graph and $R$ a ring. If $a\in L_R(E,\omega)$, then the set $\supp(a)$ of all normal generalised paths occurring in $\NF(a)$ with nonzero coefficient is called the {\it support of $a$}.
\end{definition}
\begin{definition}[{\sc local valuation}]\label{def2}
Let $(E,\omega)$ be a weighted graph and $R$ a ring. A {\it local valuation on $L_R(E,\omega)$} is a map $\nu:L_R(E,\omega)\longrightarrow \N_0\cup\{-\infty\}$ such that 
\begin{enumerate}[(1)]
\item \label{1}$\nu(a)=-\infty$ if and only if $a=0$,
\smallskip
\item \label{2}$\nu(a)=0$ if and only if $a\neq 0$ and $\supp(a)\subseteq E^0$,
\smallskip

\item \label{3}$\nu(a+b)\leq \max\{\nu(a),\nu(b)\}$ for any $a,b\in L_R(E,\omega)$ and

\smallskip

\item \label{4}$\nu(ab)=\nu(a)+\nu(b)$ for any $v\in E^0$, $a\in L_R(E,\omega)v$ and $b\in vL_R(E,\omega)$.
\end{enumerate}
We use the conventions $-\infty\leq x$ and $x+(-\infty)=(-\infty)+x=-\infty$ for any $x\in\N_0\cup\{-\infty\}$.
\end{definition}

For a certain type of weighted Leavitt path algebras, we can construct local valuations. Let $(E,\omega)$ be a weighted graph and set $\nu:=\deg\circ \NF$ (for a more formal definition of $\nu$ see Proposition \ref{thm4}). Assume that $E^{\st}$ contains an unweighted structured edge $\alpha$.  Then $\nu(\alpha_1^*\alpha_1)=\nu(r(\alpha))=0\neq 2=\nu(\alpha_1^*)+\nu(\alpha_1)$ by relation (4) in Definition \ref{def3}. Hence $\nu$ is not a local valuation. Assume now that there is a $v\in E^0$ and an $\alpha\in s^{-1}(v)$ such that $\omega(\alpha)>\omega(\beta)$ for any $\beta\in s^{-1}(v)\setminus\{ \alpha\}$. Then $\nu(\alpha_{\omega(\alpha)}^*\alpha_{\omega(\alpha)})=\nu(v)=0\neq 2=\nu(\alpha_{\omega(\alpha)}^*)+\nu(\alpha_{\omega(\alpha)})$ by relation (3) in Definition \ref{def3} and again $\nu$ is not a local valuation. This motivates the following definition.

\begin{definition}[{\sc LV-graph, LV-rose, LV-algebra}]\label{lvalgebra3}
A weighted graph $(E,\omega)$ is called an {\it LV-graph} if the condition
\[\omega(\alpha)\geq 2~\forall\alpha\in E^{\st} \text{ and }  |   \{\alpha\in s^{-1}(v)\ | \ \omega(\alpha)=\omega(v)\}|\geq 2~\forall v\in E^0,v\text{ not a sink}\tag{LV}\label{LV}\]
is satisfied. Recall that $\omega(v)=\max\{\omega(\alpha)~|~\alpha\in s^{-1}(v)\}$ for any $v\in E^0$ which is not a sink. In order to simplify the exposition, we additionally require that a LV-graph has edges (i.e.,  $E^{\st}\neq\emptyset$) and is connected (see Definition \ref{defn00}). An LV-graph $(E,\omega)$ such that $|E^0|=1$ is called an {\it LV-rose}. The weighted Leavitt path algebras $L_R(E,\omega)$ where $R$ is a ring and $(E,\omega)$ is an LV-graph are called {\it LV-algebras}.
\end{definition}

\begin{example}\label{exlvrose}
The weighted graph  
\begin{equation*}
\xymatrix{
E:\hspace{1cm}& \bullet  \ar@(ur,dr)^{\beta_1,\beta_2,\beta_3} \ar@(dr,dl)^{\gamma_1,\gamma_2,\gamma_3} \ar@(dl,ul)^{\delta_1,\delta_2}\ar@(ul,ur)^{\alpha_1,\alpha_2,\alpha_3}& 
}
\end{equation*}
is an LV-rose. We will see later that if $K$ is a field, then $L_K(E,\omega)$ is a domain which is not
isomorphic to any of the algebras $L_K(n,n+k)$.
\end{example}

For a generalised path $p$, recall the length $|p|$ from Definition~\ref{deffgp}.

\begin{proposition}\label{thm4}
If $(E,\omega)$ is an LV-graph and $R$ a domain, then the map
\begin{align*}
\nu:L_R(E,\omega)&\longrightarrow \N_0\cup\{-\infty\}\\
a&\longmapsto \max\{|p|\ | \ p\in \supp(a)\}.
\end{align*}
is a local valuation on $L_R(E,\omega)$. Here we use the convention $\max(\emptyset)=-\infty$.
\end{proposition}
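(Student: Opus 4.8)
The first three conditions of Definition~\ref{def2} are formal. Since $\NF$ is an injective $R$-bimodule map (Definition~\ref{def12}), we have $a=0\iff \NF(a)=0\iff\supp(a)=\emptyset\iff\nu(a)=\max(\emptyset)=-\infty$, which is condition~(1). For condition~(2), $\nu(a)=0$ means $\supp(a)\neq\emptyset$ and every $p\in\supp(a)$ has $|p|=0$, i.e. $p\in E^0$; this says exactly that $a\neq 0$ and $\supp(a)\subseteq E^0$. For condition~(3), linearity gives $\NF(a+b)=\NF(a)+\NF(b)$, so $\supp(a+b)\subseteq\supp(a)\cup\supp(b)$, and comparing maximal lengths yields $\nu(a+b)\leq\max\{\nu(a),\nu(b)\}$ (the case $a+b=0$ being trivial).

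For the inequality $\nu(ab)\leq\nu(a)+\nu(b)$ in condition~(4), note that every reduction in the system $S$ of the proof of Theorem~\ref{thm11} is length non-increasing: (1')--(3') send a word to a shorter word or to $0$, while (4') and (5') rewrite a word of length $2$ as an $R$-combination of words of length $\leq 2$. Because $a\in L_R(E,\omega)v$ forces $r(p)=v$ for every $p\in\supp(a)$ and $b\in vL_R(E,\omega)$ forces $s(q)=v$ for every $q\in\supp(b)$, the product $ab$ is, before reduction, an $R$-combination of composable words $pq$ with $|pq|=|p|+|q|\leq\nu(a)+\nu(b)$; since $\NF(ab)=r_S(ab)$ is reached by reductions, $\nu(ab)\leq\nu(a)+\nu(b)$. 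Here neither (LV) nor the hypothesis that $R$ is a domain is used.

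The reverse inequality is the heart of the matter. Set $m=\nu(a)$, $n=\nu(b)$ and let $A$, $B$ be the length-$m$, resp. length-$n$, homogeneous parts of $\NF(a)$, $\NF(b)$. A product $pq$ with $p\in\supp(a)$, $q\in\supp(b)$ can reduce to a term of length $m+n$ only if $|p|=m$ and $|q|=n$, so the length-$(m+n)$ part of $\NF(ab)$ equals that of $\NF(AB)$; I would work throughout in the length-associated-graded ring, whose relations are the leading (length-preserving) parts of (3) and (4). For a single composable product $pq$ of normal paths with $r(p)=v=s(q)$: if the junction letter-pair is of neither type I nor type II (Definition~\ref{deffne}) then $pq$ is itself normal of length $m+n$; otherwise (4') or (5') rewrites the junction, and here (LV) is decisive. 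Because $\omega(\alpha)\geq 2$ for all $\alpha$ and each non-sink vertex emits at least two structured edges of maximal weight, the length-$(m+n)$ part of this rewrite is a \emph{nonempty} sum of normal paths --- the terms $\alpha_i^*\beta_i$ with $i\geq 2$ in (5'), respectively the terms $\beta_i\beta_j^*$ with $\beta\neq\alpha^v$ in (4'). Thus every top-length product still attains length $m+n$, and the only remaining danger is that these contributions cancel.

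To control cancellation I would fix a length-first, then lexicographic, total order $\prec$ on normal paths, choosing the letter order compatibly with $S$: the special edge $\alpha^v$ maximal among edges of equal index sourced at $v$, and ghost letters ordered by \emph{decreasing} index. Then every reduction strictly lowers $\prec$. A short case analysis shows that a junction $(x,y)$ is \emph{safe} --- normal and not produced by any reduction, hence occurring in $\NF(xy)$ with coefficient $1$ and fed by no other product --- exactly when it is edge--edge, ghost--ghost, edge--ghost with distinct structured edges, or ghost--edge with distinct indices. If some top-length pair $(p,q)$ has a safe junction, then $pq$ occurs in $\NF(AB)$ with coefficient the product of the two (nonzero) coefficients, which is nonzero since $R$ is a domain, giving $\nu(ab)=m+n$. \textbf{The main obstacle} is the opposite configuration --- for instance when every length-$m$ path of $a$ ends in an index-$1$ ghost and every length-$n$ path of $b$ begins with an index-$1$ edge, so that all junctions are of type II and no safe pair exists. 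There genuine cancellation can occur between distinct products, and one must instead show that the $\prec$-largest normal path surviving in $\NF(AB)$ --- singled out, via the decreasing-index ordering, as the $i=2$ rewrite term coming from the $\prec$-leading pair --- has net coefficient equal to a single product of nonzero elements of the domain $R$, hence nonzero. Pinning down this one surviving monomial and verifying that no lower-order pair feeds into it is, I expect, the technical core of the argument.
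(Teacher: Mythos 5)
Your handling of conditions (1)--(3), of the inequality $\nu(ab)\leq\nu(a)+\nu(b)$, and of the case where some top-length pair has a ``safe'' junction is correct, and it runs parallel to the paper's argument (your safe junctions are exactly the configurations the paper exploits). But the case you yourself label ``the main obstacle'' --- no safe top-length pair --- is only sketched, and you explicitly defer its core (``is, I expect, the technical core of the argument''). That is a genuine gap: without it the reverse inequality $\nu(ab)\geq\nu(a)+\nu(b)$ is unproved precisely in the situation where rewrites actually occur. Moreover your diagnosis of that case overstates the danger: in your scenario where every top path of $a$ ends in an index-$1$ ghost and every top path of $b$ begins with an index-$1$ edge, a full-length rewrite term $x^k_1\dots x^k_{\nu(a)-1}\alpha_i^*\beta_i y^l_2\dots y^l_{\nu(b)}$ (with $i\geq 2$) can never coincide with any direct product $p_{k'}q_{l'}$, since the letter in position $\nu(a)$ is an index-$\geq 2$ ghost in the former and an index-$1$ ghost in the latter; and two distinct pairs cannot produce the same rewrite term, because the term determines the pair (one reads off $p_k=x^k_1\dots x^k_{\nu(a)-1}\alpha_1^*$ and $q_l=\beta_1y^l_2\dots y^l_{\nu(b)}$). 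So in that configuration there is no cancellation at all, and no global ordering is needed.

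What you are missing is the paper's per-pair dichotomy, which closes the hard case in two lines. Fix one top pair $(p_k,q_l)$ whose junction is of type I or II, and let $W$ be a full-length rewrite term: for type I, $W=x^k_1\dots x^k_{\nu(a)-1}\alpha_i\alpha_j^*y^l_2\dots y^l_{\nu(b)}$ with $\alpha\neq\alpha^u$ of maximal weight (this is where (LV) enters); for type II, the $i=2$ term. Either (a) $W\neq p_{k'}q_{l'}$ for all $k',l'$: then $W$ survives in $\NF(ab)$ with coefficient $\pm$ the product of the two nonzero coefficients, because the only other possible sources of $W$ are rewrite terms of other pairs (excluded by the injectivity above, and because type-I and type-II rewrite outputs have different shapes at the junction) and products involving a shorter factor (too short); or (b) $W=p_{k'}q_{l'}$ for some $k',l'$: then the \emph{mixed} pair $(p_k,q_{l'})$ has junction $\alpha^u_i\alpha_j^*$ (distinct structured edges), respectively $\alpha_1^*\beta_2$ (distinct indices) --- a safe junction in your own terminology --- so $p_kq_{l'}$ survives with nonzero coefficient since $R$ is a domain. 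Either way $\nu(ab)\geq\nu(a)+\nu(b)$. This cross-pair trick, rather than a leading-monomial analysis with respect to a bespoke total order, is the idea your proposal lacks.
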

\begin{proof}
Obviously (\ref{1}), (\ref{2}) and (\ref{3}) hold. It remains to show (\ref{4}). Let $v\in E^0$, $a\in L_R(E,\omega)v$ and $b\in vL_R(E,\omega)$. If one of the terms $\nu(a)$ and $\nu(b)$ equals $0$ or $-\infty$, then clearly  $\nu(ab)=\nu(a)+\nu(b)$. Suppose now $\nu(a),\nu(b)\geq 1$. Clearly $\nu(ab)\leq \nu(a)+\nu(b)$ since a reduction preserves or decreases the length of a generalised path. It remains to show that 
$\nu(ab)\geq \nu(a)+\nu(b)$. Let 
\[p_k=x^k_1\dots x^k_{\nu(a)}~(1\leq k \leq r)\]
be the elements of $\supp(a)$ with maximal length (namely $\nu(a)$) and 
\[q_l=y^l_1\dots y^l_{\nu(b)}~(1\leq l \leq s)\]
be the elements of $\supp(b)$ with maximal length (namely $\nu(b)$). We assume that the $p_k$'s are pairwise distinct and also that the $q_l$'s are pairwise distinct. Since $\NF$ is a linear map, we have 
\begin{displaymath}
\NF(p_k q_l)= \left\{
\begin{array}{ll}
p_k q_l & \text{if }x^k_{\nu(a)}y^l_1\text{ is not of type I or II},\\
\\
\NF([\delta_{ij}x^k_1\dots x^k_{\nu(a)-1}y^l_2\dots y^l_{\nu(b)}])\\-\sum\limits_{\substack{\alpha\in s^{-1}(u),\\\alpha\neq \alpha^u}}x^k_1\dots x^k_{\nu(a)-1}\alpha_i\alpha_j^*y^l_2\dots y^l_{\nu(b)} & \text{if }x^k_{\nu(a)}y^l_1\text{ is of type I,}\\
\\
\NF([\delta_{\alpha\beta}x^k_1\dots x^k_{\nu(a)-1}y^l_2\dots y^l_{\nu(b)}])\\-\sum\limits_{2\leq i\leq \max\{\omega(\alpha),\omega(\beta)\}}x^k_1\dots x^k_{\nu(a)-1}\alpha_i^*\beta_iy^l_2\dots y^l_{\nu(b)} & \text{if }x^k_{\nu(a)}y^l_1\text{ is of type II.}
\end{array}\right.
\end{displaymath}

\medskip

\begin{enumerate}

\item[Case 1]   {\it Assume that $x^k_{\nu(a)}y^l_1$ is not of type I or II for any $k,l$}.\\
Then $p_k q_l\in \supp(ab)$ for any $k,l$. It follows that $\nu(ab)\geq |p_k q_l|=\nu(a)+\nu(b)$.
 
\medskip

\item[Case 2] {\it Assume that there are $k,l$ such that $x^k_{\nu(a)}y^l_1$ is of type I.}\\
Then there are a $u\in E^0$ and $1\leq i,j \leq \omega(\alpha^u)$ such that $x^k_{\nu(a)}y^l_1=\alpha^u_i(\alpha_j^u)^*$. Choose an $\alpha\neq\alpha^u$ of weight $\omega(u)$. This is possible since $(E,\omega)$ is an LV-graph.

\medskip

\begin{enumerate}

\item [Case 2.1] {\it Assume $p_{k'} q_{l'}\neq x^k_1\dots x^k_{\nu(a)-1}\alpha_i\alpha_j^*y^l_2\dots y^l_{\nu(b)}$ for any $k',l'$.}\\
Then
\[x^k_1\dots x^k_{\nu(a)-1}\alpha_i\alpha_j^*y^l_2\dots y^l_{\nu(b)}\in \supp(ab)\]
since it doesn't cancel with another term. It follows that $\nu(ab)\geq \nu(a)+\nu(b)$. 

\medskip 

\item[Case 2.2]  {\it Assume $p_{k'}  q_{l'}= x^k_1\dots x^k_{\nu(a)-1}\alpha_i\alpha_j^*y^l_2\dots y^l_{\nu(b)}$ for some $k',l'$.}\\
One checks easily that in this case 
\[p_k  q_{l'}=x^k_1\dots x^k_{\nu(a)-1}\alpha^u_i\alpha_j^*y^l_2\dots y^l_{\nu(b)}\in \supp(ab).\]
It follows that $\nu(ab)\geq \nu(a)+\nu(b)$.

\end{enumerate}

\medskip 
\item[Case 3] {\it Assume that there are $k,l$ such that $x^k_{\nu(a)}y^l_1$ is of type II.}\\
Then there are $\alpha,\beta\in E^{\st}$ such that $x^k_{\nu(a)}y^l_1=\alpha_1^*\beta_1$. Since $(E,\omega)$ is an LV-graph, $\omega(\alpha),\omega(\beta)\geq 2$.

\medskip

\begin{enumerate}

\item [Case 3.1] {\it Assume $p_{k'} q_{l'}\neq x^k_1\dots x^k_{\nu(a)-1}\alpha_2^*\beta_2y^l_2\dots y^l_{\nu(b)}$ for any $k',l'$. }\\
Then
\[x^k_1\dots x^k_{\nu(a)-1}\alpha_2^*\beta_2y^l_2\dots y^l_{\nu(b)}\in \supp(ab)\]
since it doesn't cancel with another term. It follows that $\nu(ab)\geq \nu(a)+\nu(b)$. 

\medskip 

\item[Case 3.2]  {\it Assume $p_{k'}  q_{l'}= x^k_1\dots x^k_{\nu(a)-1}\alpha_2^*\beta_2y^l_2\dots y^l_{\nu(b)}$ for some $k',l'$.}\\
One checks easily that in this case 
\[p_k  q_{l'}=x^k_1\dots x^k_{\nu(a)-1}\alpha_1^*\beta_2y^l_2\dots y^l_{\nu(b)}\in \supp(ab)\]
It follows that $\nu(ab)\geq \nu(a)+\nu(b)$.

\end{enumerate}

\end{enumerate}

Hence (\ref{4}) also holds and thus $\nu$ is a local valuation on $L_R(E,\omega)$.
\end{proof}
\begin{theorem}\label{cor1}
Let $(E,\omega)$ be a weighted graph and $R$ a ring. Then $L_R(E,\omega)$ is a domain if and only if $R$ is a domain and $(E,\omega)$ is either an unweighted rose with not more than one petal or an LV-rose.
\end{theorem}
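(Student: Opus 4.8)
The plan is to establish both implications using two tools already available: the basis of normal generalised paths from Theorem~\ref{thm11}, and the local valuation of Proposition~\ref{thm4}. The basis theorem tells us that every vertex, every $\alpha_i$, every $\alpha_i^*$, and indeed every normal generalised path, is a nonzero basis element, so distinct normal paths are $R$-linearly independent; I would invoke this throughout to certify that explicitly constructed elements are nonzero. Two immediate reductions handle the ``coordinates'' of the statement. If $L_R(E,\omega)$ is a domain it is nonzero, so $E^0\neq\emptyset$; fixing a vertex $v$, the assignment $r\mapsto rv$ realises $R$ as a subring with $(rv)(sv)=rs\,v$ and $rv=0\Leftrightarrow r=0$, so $R$ inherits the domain property. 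Moreover, if there were distinct vertices $u\neq v$ then $uv=\delta_{uv}v=0$ with $u,v\neq0$; hence $|E^0|=1$ and $E$ is a rose whose edges are all loops at $v$.

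For the backward implication (with $R$ a domain) I would check the two permitted families directly. If $E$ has no edge then $L_R(E,\omega)=Rv\cong R$ is a domain. If $E$ is a single loop $\alpha$ of weight $1$, then relations (3) and (4) of Definition~\ref{def3} give $\alpha\alpha^*=\alpha^*\alpha=v$, so $\alpha,\alpha^*$ are a mutually inverse pair and $L_R(E,\omega)\cong R[t,t^{-1}]$, a domain. If $(E,\omega)$ is an LV-rose, then Proposition~\ref{thm4} provides a local valuation $\nu$; because the single vertex $v$ is the identity we have $L_R(E,\omega)v=L_R(E,\omega)=vL_R(E,\omega)$, so axiom~(\ref{4}) upgrades to $\nu(ab)=\nu(a)+\nu(b)$ for \emph{all} $a,b$. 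Then $ab=0$ forces $\nu(a)=-\infty$ or $\nu(b)=-\infty$, i.e. $a=0$ or $b=0$, so $L_R(E,\omega)$ is a domain.

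For the forward implication it remains to show that any single-vertex graph which is neither an unweighted rose with at most one petal nor an LV-rose has zero divisors. A short case check shows that every such ``offending'' rose falls into (at least) one of two cases, which I would treat as follows. Case~1: $E$ has a loop $\alpha$ of weight $1$ together with some other loop $\beta$. Then relation~(4) of Definition~\ref{def3} reads $\sum_{i=1}^{\max\{1,\omega(\beta)\}}\alpha_i^*\beta_i=\delta_{\alpha\beta}v=0$, and since $\alpha_i=0$ for $i\geq2$ it collapses to $\alpha_1^*\beta_1=0$, a product of nonzero basis elements. Case~2: every loop has weight $\geq2$ but the top weight $w=\omega(v)$ is attained by a unique loop $\alpha$. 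Then relation~(3) of Definition~\ref{def3} with $i=j=w$ loses all summands but one (a loop $\delta\neq\alpha$ has $\omega(\delta)<w$, hence $\delta_w=0$), giving $\alpha_w\alpha_w^*=v$; as $\alpha_w^*\alpha_w$ is a normal path of length two (its index $w\geq2$ makes it neither of type I nor of type II), we get $v-\alpha_w^*\alpha_w\neq0$, while $\alpha_w(v-\alpha_w^*\alpha_w)=\alpha_w-(\alpha_w\alpha_w^*)\alpha_w=\alpha_w-v\alpha_w=0$. A final bookkeeping check confirms that a rose avoiding both cases is exactly either a rose with at most one loop of weight $1$ (an unweighted rose with at most one petal) or a rose in which every loop has weight $\geq2$ with at least two loops of maximal weight (an LV-rose), so these two cases exhaust the offending roses.

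The main obstacle I anticipate is Case~2 of the forward direction: recognising that a uniquely attained top weight forces the contraction $\alpha_w\alpha_w^*=v$, which manufactures the idempotent $v-\alpha_w^*\alpha_w$ and hence a zero divisor. The delicate point is verifying that $\alpha_w^*\alpha_w$ is genuinely an irreducible normal path distinct from $v$, so that $v-\alpha_w^*\alpha_w\neq0$; this is precisely where the basis of Theorem~\ref{thm11}, together with the observation that $\alpha_w^*\alpha_w$ is of neither type I nor type II, does the essential work. Case~1 is comparatively immediate, since $\alpha_1^*\beta_1$ is forced to vanish by a single relation while $\alpha_1^*$ and $\beta_1$ are visibly nonzero basis elements.
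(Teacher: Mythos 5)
Your proposal is correct and takes essentially the same route as the paper: the backward direction (no petals gives $R$, one unweighted petal gives $R[X,X^{-1}]$, and an LV-rose gives a local valuation which, on a one-vertex graph, becomes a genuine valuation via axiom~(\ref{4})) is identical to the paper's argument, while your forward direction simply supplies the case analysis that the paper compresses into ``one checks easily''. Your two zero-divisor constructions --- $\alpha_1^*\beta_1=0$ from relation~(4) when a weight-one loop coexists with another loop, and $\alpha_w(v-\alpha_w^*\alpha_w)=0$ from the collapsed relation~(3) when the top weight is attained uniquely --- together with the basis of Theorem~\ref{thm11} certifying the relevant elements nonzero, are sound and exhaust the offending roses.
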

\begin{proof}
One checks easily that $L_R(E,\omega)$ is the zero ring or has zero divisors if $R$ is not a domain or $(E,\omega)$ is neither an unweighted rose with not more than one petal nor an LV-rose. Suppose now that $R$ is a domain. If $(E,\omega)$ is a rose with no petals, then $L_R(E,\omega)\simeq R$ and if $(E,\omega)$ is an unweighted rose with one petal, then $L_R(E,\omega)\simeq R[X,X^{-1}]$. Hence $L_R(E,\omega)$ is a domain in these cases. If $(E,\omega)$ is an LV-rose, then there is a local valuation on $L_R(E,\omega)$ by the previous proposition. It follows from (\ref{1}) and (\ref{4}) in Definition \ref{def2} that $L_R(E,\omega)$ is a domain.
\end{proof}

We recover the theorem of Leavitt~\cite[footnote~6]{vitt62} and Cohn~\cite{cohn66}. 

\begin{corollary}
For a domain $R$, the Leavitt algebras $L_R(n,n+k)$, $n\geq 2$, are domains. 
\end{corollary}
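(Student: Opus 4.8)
The plan is to realise each Leavitt algebra $L_R(n,n+k)$ as the weighted Leavitt path algebra of a suitable rose and then invoke the classification in Theorem~\ref{cor1}. First I would recall Example~\ref{wlpapp}: let $(E,\omega)$ be the weighted graph with a single vertex $v$ and $n+k$ structured loops $y_1,\dots,y_{n+k}$, each of weight $\omega(y_i)=n$. The computation there shows that relations (3) and (4) of Definition~\ref{def3} are precisely the matrix identities $YX=I_n$ and $XY=I_{n+k}$ defining $L_R(n,n+k)$, so that $L_R(E,\omega)\cong L_R(n,n+k)$. Since this identification is a formal statement about the generators and defining relations, it is valid over any unital ring $R$, in particular over a domain, even though Example~\ref{wlpapp} is phrased over a field $K$.

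Next I would verify that $(E,\omega)$ is an LV-rose in the sense of Definition~\ref{lvalgebra3} whenever $n\geq 2$. The graph has exactly one vertex, so $|E^0|=1$, it is trivially connected, and $E^{\st}\neq\emptyset$. For condition~\eqref{LV}: every loop has weight $n\geq 2$, so $\omega(\alpha)\geq 2$ for all $\alpha\in E^{\st}$; and since $v$ is the only vertex we have $\omega(v)=n$, whence the set $\{\alpha\in s^{-1}(v)\mid \omega(\alpha)=\omega(v)\}$ consists of all $n+k$ loops. As $n\geq 2$ and $k\geq 0$ force $n+k\geq 2$, this set has at least two elements, and both clauses of~\eqref{LV} hold. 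Thus $(E,\omega)$ is an LV-rose.

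Finally, since $R$ is assumed to be a domain and $(E,\omega)$ is an LV-rose, Theorem~\ref{cor1} gives that $L_R(E,\omega)$ is a domain; transporting along the isomorphism $L_R(E,\omega)\cong L_R(n,n+k)$ yields the claim.

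I expect no genuine obstacle once Theorem~\ref{cor1} is available: the only points requiring care are confirming that the identification of Example~\ref{wlpapp} remains valid over an arbitrary domain rather than a field (immediate, as it concerns only generators and relations) and checking the counting clause of~\eqref{LV}, which reduces to the elementary inequality $n+k\geq 2$ following from $n\geq 2$.
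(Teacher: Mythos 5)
Your proposal is correct and is exactly the argument the paper intends: the corollary is stated right after Theorem~\ref{cor1}, with the implicit proof being the identification $L_R(n,n+k)\cong L_R(E,\omega)$ for the rose with $n+k$ loops of weight $n$ (Example~\ref{wlpapp}), the observation that this rose is an LV-rose when $n\geq 2$, and an application of Theorem~\ref{cor1}. Your added care about the identification holding over an arbitrary unital ring rather than a field, and the explicit check of condition (LV), are exactly the right details to verify.
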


In contrast to the fact that the class of weighted Leavitt path algebras does not contain any new examples of simple algebras, it contains new examples of domains. We use the dependence number, a ring-invariant introduced by Cohn in \cite{cohn66}, in order to prove that there are weighted Leavitt path algebras which are domains but are not isomorphic to any of Leavitt's algebras.

\begin{definition}[{\sc Filtration, valuation}]
Let $R$ be a ring. A {\it (positive increasing) filtration on $R$} is a map $\nu:R\rightarrow \N_0\cup\{-\infty\}$ such that 
\begin{equation}\label{eqn2}
\nu(x)=-\infty\Leftrightarrow x=0,\quad\nu(x-y)\leq \max\{\nu(x),\nu(y)\},\quad\nu(xy)\leq \nu(x)+\nu(y)\quad\forall x,y\in R.
\end{equation}
Let $\nu$ be a filtration on $R$ and set $R_n:=\{x\in R\mid \nu(x)\leq n\}$ for any $n\in \N_0\cup\{-\infty\}$. Then each $R_n$ is an additive subgroup of $R$ and 
\begin{equation}\label{eqn3}
R_mR_n\subseteq R_{m+n}~\forall m,n\in \N_0,\quad \bigcup\limits_{n\in \N_0}R_n=R,\quad \{0\}=R_{-\infty}\subseteq R_0\subseteq R_1\subseteq\dots.
\end{equation}
Conversely if $\{R_n\mid n\in \N_0\}$ is a family of additive subgroups of $R$ such that (\ref{eqn3}) holds, then the map $\nu:R\rightarrow \N_0\cup\{-\infty\}$ defined by $\nu(x)=\min\{n\in\N_0\cup\{-\infty\}\mid x\in R_n\}$, is a filtration on $R$. Hence fixing a filtration on $R$ is the same as fixing a family of additive subgroups of $R$ such that (\ref{eqn3}) holds. Every ring has the {\it trivial filtration} $\nu$ defined by $\nu(0)=-\infty$ and $\nu(x)=0~\forall x\neq 0$. A filtration $\nu$ on $R$ such that $\nu(xy)=\nu(x)+\nu(y)~\forall x,y\in R$ is called a {\it valuation}.
\end{definition}

\begin{definition}[{\sc Dependence number of a ring}]
Let $R$ be a ring and $\nu$ a filtration on $R$.
\begin{enumerate}[(1)]
\item A subset $X$ of $R$ is called {\it $R$-dependent} if $X = \{0\}$ or if $X = \{x_1, \dots , x_r\}$ and there exist $a_1,\dots,a_r\in R$ such that
\[\nu(x_1) + \nu(a_1) = \dots= \nu(x_r) + \nu(a_r) > v(\sum\limits_{i=1}^r x_ia_i)\].
\item An element $y\in R$ is called {\it $R$-dependent on a subset $X$} of $R$ if $y = 0$ or if there exist $x_1,\dots,x_r\in X$ and $a_1, \dots , a_r\in R$ such that
\[\nu(y - \sum\limits_{i=1}^rx_ia_i)< \nu(y),\hspace{0.5cm}  \nu(x_i)+\nu(a_i)\leq\nu(y)\quad (i = 1, \dots, r).\]
\end{enumerate}
Further, a subset $X$ of $R$ is called {\it strongly $R$-dependent} if it is $R$-dependent
and any element of maximal value in $X$ is $R$-dependent on the remaining elements of $X$.
The {\it dependence number of $R$ relative to $\nu$}, $\lambda_\nu(R)$, is the least integer $n$ for which there exists an $R$-dependent set of $n$ elements which is not strongly $R$-dependent. The supremum of the $\lambda_\nu(R)$ for all filtrations
$\nu$ on $R$ is called the {\it dependence number of $R$} and is denoted by $\lambda(R)$.
\end{definition}

\begin{theorem}
Let $K$ be a field and $(E,\omega)$ an LV-rose such that the minimal weight is $2$. Then the dependence number of $L_K(E,\omega)$ equals $2$.
\end{theorem}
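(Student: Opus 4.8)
The plan is to exploit that, because $(E,\omega)$ is a rose, its single vertex $v$ is the identity of $R:=L_K(E,\omega)$, so $R=vRv$ and condition (4) of Definition~\ref{def2} upgrades the local valuation $\nu=\deg\circ\NF$ of Proposition~\ref{thm4} to a genuine valuation, $\nu(xy)=\nu(x)+\nu(y)$ for all $x,y\in R$ (and $R$ is a domain by Theorem~\ref{cor1}). Since the minimal weight is $2$, I would fix a structured loop $\gamma$ with $\omega(\gamma)=2$ and any other loop $\beta$ (one exists, as an LV-rose has at least two loops of maximal weight). Relation (4) then supplies two identities that drive the whole argument: taking $\alpha=\beta=\gamma$ gives $\gamma_1^*\gamma_1+\gamma_2^*\gamma_2=v=1$, and taking the pair $\gamma,\beta$ and using that $\gamma_i^*=0$ for $i\ge 3$ gives the two-term relation $\gamma_1^*\beta_1+\gamma_2^*\beta_2=0$, with all four factors nonzero.

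For the lower bound $\lambda(R)\ge 2$ I would use the valuation $\nu$ directly. Because $\nu(xa)=\nu(x)+\nu(a)$, no nonzero singleton $\{x\}$ can satisfy $\nu(x)+\nu(a)>\nu(xa)$, so no nonzero singleton is $R$-dependent; and $\{0\}$ is strongly $R$-dependent trivially. Hence every $R$-dependent singleton is strongly $R$-dependent, so $\lambda_\nu(R)\ge 2$, and therefore $\lambda(R)=\sup_{\nu'}\lambda_{\nu'}(R)\ge 2$.

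For the upper bound $\lambda(R)\le 2$ I would fix an arbitrary filtration $\nu'$ and show $\lambda_{\nu'}(R)\le 2$. If $\nu'$ is not a valuation, choose $x,a$ with $\nu'(xa)<\nu'(x)+\nu'(a)$; then $\{x\}$ is $R$-dependent, and since a nonzero element is never $R$-dependent on the empty set, $\{x\}$ is not strongly $R$-dependent, giving $\lambda_{\nu'}(R)=1$. So I may assume $\nu'$ is a valuation. Applying $\nu'$ to $\gamma_1^*\beta_1+\gamma_2^*\beta_2=0$, the two summands are negatives of each other and hence have equal value, so $\nu'(\gamma_1^*)+\nu'(\beta_1)=\nu'(\gamma_2^*)+\nu'(\beta_2)=:e\ge 0$; with the multipliers $\beta_1,\beta_2$ this shows $\{\gamma_1^*,\gamma_2^*\}$ is $R$-dependent, since the common value $e$ exceeds $\nu'(\gamma_1^*\beta_1+\gamma_2^*\beta_2)=\nu'(0)=-\infty$. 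It then remains only to prove that this pair is not strongly $R$-dependent, i.e. that an element of maximal value among $\gamma_1^*,\gamma_2^*$ is not $R$-dependent on the other.

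I expect this last step to be the main obstacle, as it must be carried out for every valuation $\nu'$ at once. For the standard $\nu$ it is immediate: $R_0=Kv=K$, so any admissible multiplier lies in $K$, and $\gamma_1^*-c\gamma_2^*$ is a nonzero combination of the distinct length-one basis elements $\gamma_1^*,\gamma_2^*$, hence still has value $1$. For a general valuation one is pushed into the associated graded domain $\operatorname{gr}_{\nu'}(R)$ and must show that $\overline{\gamma_1^*}\notin\overline{\gamma_2^*}\operatorname{gr}_{\nu'}(R)$ in the relevant degree, i.e. that $\gamma_1^*$ is not left-divisible by $\gamma_2^*$ up to lower $\nu'$-order. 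The delicate point is that the identity $\gamma_1^*\gamma_1+\gamma_2^*\gamma_2=1$ makes the row $(\gamma_1^*,\gamma_2^*)$ right-unimodular (it has the right inverse $(\gamma_1,\gamma_2)^{\mathrm t}$) while $(\beta_1,\beta_2)^{\mathrm t}$ is a nontrivial relation on it; this is precisely the configuration in which a two-term relation can be trivialised, so the argument must verify that no valuation actually realises such a trivialisation. As a conceptual check and a shortcut in the generic case, note that $\lambda_{\nu'}(R)\ge 3$ amounts to the $2$-term weak algorithm for $\nu'$, which would force $R$ to be a $2$-fir; when $E$ has at least three loops the relations $(\beta_1,\beta_2)^{\mathrm t}$ and $(\beta_1',\beta_2')^{\mathrm t}$ from two further loops are independent relations on $(\gamma_1^*,\gamma_2^*)$, so $R$ is not a $2$-fir and the bound is automatic. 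The genuinely hard instances are the few-loop roses such as $L_K(2,2)$, where the direct valuation computation indicated above appears unavoidable.
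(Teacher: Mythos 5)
Your framework is the right one, and much of it matches the paper: the reduction (via Cohn's Proposition 4.1 \cite{cohn66}, which you essentially reprove) to the case where $\nu'$ is a valuation, and the observation that relation (4) of Definition~\ref{def3}, applied to the weight-two loop $\gamma$ and any other loop $\beta$, yields $\gamma_1^*\beta_1+\gamma_2^*\beta_2=0$, so that $\{\gamma_1^*,\gamma_2^*\}$ is $R$-dependent with respect to \emph{every} valuation. But the step you yourself flag as ``the main obstacle'' is a genuine gap, and it is exactly the step that carries the theorem: you must rule out, for an \emph{arbitrary} valuation $\nu'$, that every dependent set of at most two elements is strongly dependent, i.e.\ that $\nu'$ satisfies the $2$-term weak algorithm. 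Neither of your suggested routes closes it: the associated-graded computation is only carried out for the standard $\nu$, and the $2$-fir shortcut, even granting its module-theoretic details, by your own admission says nothing about LV-roses with exactly two structured edges (e.g.\ two loops of weight $2$, giving $L_K(2,2)$), which satisfy the hypotheses of the theorem. As written, the proposal proves the statement only for a proper subclass of LV-roses.

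The paper closes precisely this gap with a different device, which is worth internalizing. It argues by contradiction: assume $\lambda_{\nu'}(L)\geq 3$ for some filtration $\nu'$; then $\nu'$ is a valuation and the $2$-term weak algorithm holds for it. First, strong dependence of the sets $\{x,1\}$ shows that $\nu'(x)=0$ iff $x$ is right invertible iff $x\in K\setminus\{0\}$ (the last equivalence coming from the standard valuation $\nu$). Then the weak algorithm is used, following \cite{cohn63}, to run a Euclidean algorithm on $r_{-1}=\alpha_1^*$, $r_0=\alpha_2^*$, producing quotients $q_i$ and remainders $r_i$ with $\nu'(r_0)>\nu'(r_1)>\cdots>\nu'(r_n)=-\infty$ and $\nu'(q_i)>0$ for $i\geq 2$. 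The punchline is to measure this sequence with the \emph{other} valuation: since $\nu'(q_i)>0$ forces $q_i\notin K$ and hence $\nu(q_i)>0$, an induction gives $\nu(r_i)=1+\sum_{j\leq i}\nu(q_j)\geq 1$ for every $i$, which contradicts $r_n=0$. This two-filtration trick --- the hypothetical valuation $\nu'$ makes the remainders shrink while the explicit valuation $\nu$ provably makes them grow --- is what handles all valuations at once, including the two-loop case your sketch leaves open; some version of it (or of the machinery behind \cite[Theorem 5.2]{cohn66}) is what your proposal still needs to become a proof.
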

\begin{proof}
Set $L:=L_K(E,\omega)$. Let $\alpha\in E^{\st}$ be of weight $2$ and choose a $\beta\in E^{\st}$ such that $\beta\neq\alpha$ (possible since any LV-algebra contains at least $2$ structured edges). Let $\nu$ be valuation on $L$ defined in Proposition \ref{thm4}. By relation (4) in Definition \ref{def3} we have $\alpha_1^*\beta_1+\alpha_2^*\beta_2=0$. It follows that the set $\{\alpha_1^*,\alpha_2^*\}$ is $L$-dependent with respect to $\nu$. But $\{\alpha_1^*,\alpha_2^*\}$ is not strongly $L$-dependent since $\nu(\alpha_1^*-\alpha_2^*x)\geq \nu(\alpha_1^*)$ for any $x\in L$. Hence $\lambda_\nu(L)\leq 2$. On the other hand $\lambda_\nu(L)>1$ by \cite[Proposition 4.1]{cohn66}. Thus $\lambda_\nu(L)=2$.\\ Assume there is a filtration $\nu'$ on $L$ such that $\lambda_{\nu'}(L)\geq 3$. Then $\nu'$ is a valuation by \cite[Proposition 4.1]{cohn66}. Hence $\nu'(1)=\nu'(1\cdot 1)=\nu'(1)+\nu'(1)$ and therefore $\nu'(1)=0$. It follows that $\nu'(x)=0$ for any right invertible element $x$. On the other hand if $\nu'(x)=0$, then the set $\{x,1\}$ is $L$-dependent with respect to $\nu'$. Hence it is strongly so and we get that $x$ is right invertible. But the right invertible elements of $L$ are precisely the elements of $K\setminus\{0\}$ (since $\nu$ is a valuation). Hence we have shown that $\nu'(x)=0$ if and only if $x\in K\setminus\{0\}$. W.l.o.g. assume that $\nu'(\alpha_1^*)\geq \nu'(\alpha_2^*)$. Set $r_{-1}:=\alpha_1^*$ and $r_0:=\alpha_2^*$. By applying an analog of the Euclidean algorithm to $r_{-1}$ and $r_0$ we get elements $q_1,\dots,q_{n},r_1,\dots,r_{n}\in L$, where $n\geq 1$, such that $\nu'(q_1)\geq 0$, $\nu'(q_2),\dots,\nu'(q_{n})>0$, $\nu'(r_0)>\nu'(r_1)>\dots>\nu'(r_{n})=-\infty$ and 
\[
r_{-1}=r_0q_1+r_1,\quad r_0=r_1q_2+r_2,\quad\dots\quad,r_{n-3}=r_{n-2}q_{n-1}+r_{n-1},\quad r_{n-2}=r_{n-1}q_{n}+r_n
\]
(see \cite[pp. 340--341]{cohn63}). We prove by induction on $i$ that 
\begin{equation}\label{growth}
\nu(r_i)=1+\sum\limits_{j=1}^i\nu(q_j)\text{ for any }i\in\{0,\dots,n\}
\end{equation}
(which means that $\nu(r_i)$ increases as $i$ increases while $\nu'(r_i)$ decreases). One checks easily that (\ref{growth}) holds for $i=0,1$. Let now $2\leq i\leq n$. We have $\nu(r_i)=\nu(r_{i-2}-q_ir_{i-1})$. By the induction hypothesis, $\nu(r_{i-2})= 1+\sum\limits_{j=1}^{i-2}\nu(q_j)$ and $\nu(q_ir_{i-1})=\nu(q_i)+\nu(r_{i-1})=1+\sum\limits_{j=1}^{i}\nu(q_j)$. But $\nu'(q_i)>0$ since $i\geq 2$. Hence $q_i\not\in K$ and therefore $\nu(q_i)>0$. It follows that $\nu(q_ir_{i-1})>\nu(r_{i-2})$ and hence $\nu(r_i)=\nu(q_ir_{i-1})=1+\sum\limits_{j=1}^i\nu(q_j)$. Therefore (\ref{growth}) holds. It follows that $-\infty=\nu(0)=\nu(r_{n})\overset{(\ref{growth})}{=}1+\sum\limits_{j=1}^{n}\nu(q_j)\geq 1$ and hence we have a contradiction. Thus $\lambda(L)=2$.
\end{proof}

Let $K$ be a field and $(E,\omega)$ an LV-rose such that the minimal weight is $2$, the maximal weight is $l\geq 3$ and the number of structured edges is $l+m$ for some $m>0$. By \cite[Theorem 5.21]{hazrat13} and  the previous theorem, $L_K(E,\omega)$ has module type $(l,m)$ (cf. \cite{vitt62}) and dependence number $2$. Let $n,k\geq 1$. By Example \ref{wlpapp}, \cite[Theorem 5.21]{hazrat13} and \cite[Theorem 5.2]{cohn66}, $L_K(n,n+k)$ has module type $(n,k)$ and dependence number $n$. Hence $L_K(E,\omega)$ cannot be isomorphic to one of Leavitt's algebras $L_K(n,n+k)$. In particular, 
if $(E,\omega)$ is the LV-rose from Example \ref{exlvrose} and $K$ a field, then the domain $L_K(E,\omega)$ is not isomorphic to any of the algebras $L_K(n,n+k)$.

In the next three theorems we show that LV-algebras over domains are prime, semi-primitive and nonsingular rings. We further show that contrary to the case of Leavitt path algebras, they are not graded von Neumann regular.

\begin{theorem}\label{cor6}Let $(E,\omega)$ be an LV-graph and $R$ a domain. Then $L_R(E,\omega)$ is a prime ring.
\end{theorem}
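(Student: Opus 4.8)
The plan is to use the standard primeness criterion: a ring $A$ is prime if and only if for all nonzero $a,b\in A$ there is an $x\in A$ with $axb\neq 0$. The whole argument will run through the local valuation $\nu$ supplied by Proposition~\ref{thm4} (available since $R$ is a domain), and its multiplicativity property \ref{4} of Definition~\ref{def2}, namely $\nu(cd)=\nu(c)+\nu(d)$ whenever $c\in L_R(E,\omega)v$ and $d\in vL_R(E,\omega)$ for a common vertex $v$. The point to keep in mind throughout is that $\nu(c)\geq 0$ precisely when $c\neq 0$, while $\nu(0)=-\infty$ by \ref{1} of Definition~\ref{def2}.

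First I would cut $a$ and $b$ down to pieces sitting at single vertices. Given nonzero $a,b$, pick a normal generalised path $p\in\supp(a)$ and a normal generalised path $q\in\supp(b)$, and set $u:=r(p)$ and $w:=s(q)$. Right-multiplying $\NF(a)$ by $u$ kills exactly the normal paths whose range is not $u$ and fixes the rest, so $p$ survives and $a':=au\neq 0$; symmetrically $b':=wb\neq 0$. Thus $a'\in L_R(E,\omega)u$ and $b'\in wL_R(E,\omega)$, and both have $\nu\geq 0$.

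The heart of the proof, and the main obstacle, is to link $u$ to $w$ by a \emph{nonzero} element of $uL_R(E,\omega)w$. For this I would use that an LV-graph is connected (Definition~\ref{lvalgebra3}): choose a non-backtracking walk from $u$ to $w$ in the underlying undirected graph and realise it as a generalised path $x=x_1\cdots x_m$ with $s(x)=u$ and $r(x)=w$, each edge traversed forwards by some $\gamma_i$ and backwards by some $\gamma_i^*$. I then want $x$ to be \emph{normal}, since by Theorem~\ref{thm11} a normal generalised path is a basis element and hence nonzero. A type~I subword $\alpha^v_i(\alpha^v_j)^*$ forces an immediate backtrack along a structured edge, which the non-backtracking choice excludes; a type~II subword has the shape $\alpha_1^*\beta_1$, and since the LV-condition gives $\omega(\gamma)\geq 2$ for every structured edge, I can assign index $2$ to every starred letter, so no type~II subword can occur. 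Checking that this indexing is consistent along the whole walk while preserving normality is the delicate part. When $u=w$ one simply takes $x=u$. In all cases this produces a nonzero $x\in uL_R(E,\omega)w$.

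Finally I would invoke property \ref{4} of Definition~\ref{def2} twice. As $a'\in L_R(E,\omega)u$ and $x\in uL_R(E,\omega)$, we get $\nu(a'x)=\nu(a')+\nu(x)\geq 0$, so $a'x\neq 0$, and moreover $a'x\in L_R(E,\omega)w$ because $x\in L_R(E,\omega)w$. Then, with $b'\in wL_R(E,\omega)$, a second application gives $\nu\big((a'x)b'\big)=\nu(a'x)+\nu(b')\geq 0$, whence $a'xb'\neq 0$. Using $ux=x=xw$ one simplifies $a'xb'=au\,x\,wb=axb$, so $axb\neq 0$. As $a,b$ were arbitrary nonzero elements, $L_R(E,\omega)$ is prime.
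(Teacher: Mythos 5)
Your proof is correct and follows the same overall architecture as the paper's: cut $a$ and $b$ down to $au\neq 0$ and $wb\neq 0$, connect $u$ to $w$ by a generalised path using connectedness of the LV-graph, and apply the multiplicativity condition (4) of Definition~\ref{def2} repeatedly to conclude $axb\neq 0$. The one place you diverge is the step you call the heart of the proof: producing a \emph{nonzero} element of $uL_R(E,\omega)w$. You do this by constructing a \emph{normal} generalised path (non-backtracking walk, index $2$ on every starred letter, which is available because the LV-condition forces $\omega(\gamma)\geq 2$) and then invoking the basis of Theorem~\ref{thm11}; this construction is sound, but it is unnecessary. The paper takes an \emph{arbitrary} generalised path $p$ from $u$ to $w$ and observes that $\nu(p)\geq 0$ follows from the local valuation alone: writing $p=x_1\cdots x_m$, each letter $x_i$ is itself a normal generalised path of length one (hence nonzero, with $\nu(x_i)=1$), and iterating condition (4) of Definition~\ref{def2} along the factorisation gives $\nu(p)=m\geq 0$, in particular $p\neq 0$. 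In other words, the same multiplicativity you invoke in your final step already disposes of the connecting path, and the ``delicate part'' you flag---checking that the indexing stays consistent along the whole walk---simply never arises. What your variant buys is independence from that iteration argument (you only use Theorem~\ref{thm11} and two applications of condition (4)); what the paper's variant buys is brevity and no combinatorial bookkeeping.
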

\begin{proof}
Let $a,b\in L_R(E,\omega)\setminus\{0\}$. Choose $u,v\in E^0$ such that $au,vb\neq 0$. It follows from (\ref{1}) in Definition \ref{def2} that $\nu(au),\nu(vb)\geq 0$. Since $(E,\omega)$ is connected, there is a generalised path $p$ such that $s(p)=u$ and $r(p)=v$. Clearly $\nu(p)\geq 0$ since $\nu$ is a local valuation.
It follows that 
\begin{align*}
&\nu(apb)\\
=&\nu(a(upv)b)\\
=&\nu((au)p(vb))\\
\overset{(\ref{4})}{=}&\nu(au)+\nu(p)+\nu(vb)\geq 0.
\end{align*}
It follows from (\ref{1}) in Definition \ref{def2} that $apb\neq 0$ and thus $L_R(E,\omega)$ is prime.
\end{proof}

\begin{theorem}\label{nonsing}
Let $(E,\omega)$ be an LV-graph and $R$ a domain. Then $L_R(E,\omega)$ is a nonsingular ring. 
\end{theorem}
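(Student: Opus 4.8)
The plan is to deduce nonsingularity directly from the local valuation $\nu$ constructed in Proposition~\ref{thm4}, by showing that for every nonzero $a\in L:=L_R(E,\omega)$ the right annihilator $\ann_r(a)$ is not essential; this forces the right singular ideal $Z_r(L)$ to vanish, and a symmetric argument handles the left singular ideal $Z_l(L)$.

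First I would locate a vertex witnessing $a$. Since $\NF(a)$ is a finite $R$-combination of normal generalised paths $p$, each satisfying $p\,r(p)=p$, the finite sum $e=\sum v$ over the occurring ranges $v=r(p)$ is a local unit with $ae=a$; as $a\neq 0$, some $av\neq 0$. I would then take the nonzero right ideal $vL$ and claim that $vL\cap\ann_r(a)=\{0\}$. Indeed, any $c\in vL$ obeys $c=vc$, so $ac=(av)c$ with $av\in Lv$ and $c\in vL$; clause~(\ref{4}) of Definition~\ref{def2} then yields $\nu(ac)=\nu(av)+\nu(c)$. If $c\neq 0$, clause~(\ref{1}) gives $\nu(av),\nu(c)\geq 0$, whence $\nu(ac)\geq 0$ and $ac\neq 0$. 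Thus $c\in\ann_r(a)$ forces $c=0$, so $vL$ is a nonzero right ideal meeting $\ann_r(a)$ trivially and $\ann_r(a)$ cannot be essential. Hence $Z_r(L)=\{0\}$. Choosing instead a vertex $v$ with $va\neq 0$ and working with the left ideal $Lv$ (via the factorisation $ca=c(va)$ for $c\in Lv$, together with clause~(\ref{4})) gives $Z_l(L)=\{0\}$ in exactly the same way, so $L$ is nonsingular.

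The one place requiring care — and the closest thing to an obstacle — is keeping the factorisations $av\in Lv$ and $c\in vL$ honest, so that clause~(\ref{4}), which is multiplicative only across the corner cut out by a \emph{single} vertex $v$, genuinely applies; this is precisely why I first pin down one vertex $v$ with $av\neq 0$. In contrast to the primeness proof (Theorem~\ref{cor6}), no connectivity of $(E,\omega)$ is invoked, since the entire argument takes place over a single vertex, and everything reduces to translating the non-vanishing of products guaranteed by $\nu$ into the non-essentiality of annihilators.
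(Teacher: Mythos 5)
Your proposal is correct and follows essentially the same route as the paper's proof: pick a vertex $v$ with $av\neq 0$, use clause~(\ref{4}) of Definition~\ref{def2} to show $\ann_r(a)\cap vL_R(E,\omega)=\{0\}$, conclude $\ann_r(a)$ is not essential, and argue symmetrically on the left. The only difference is that you spell out details the paper leaves implicit (the local-unit argument producing $v$, and the factorisations $ac=(av)c$, $ca=c(va)$ that make clause~(\ref{4}) applicable), which is fine.
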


\begin{proof}
Let $a\in L_R(E,\omega)\setminus\{0\}$. Choose a $v\in E^0$ such that $av\neq 0$. Consider the right ideal $vL_R(E,\omega)$. Then $\ann_r(a) \cap vL_R(E,\omega)=0$. For, if there is $b\in vL_R(E,\omega)$ such that $ab=0$, then condition~(\ref{4}) of Definition \ref{def2} implies $b=0$ (as LV-algebras are ``locally'' domain).  This shows that $\ann_r(a)$, $a\in L_R(E,\omega)\setminus\{0\}$,  is not essential and thus $L_R(E,\omega)$ is right nonsingular. The proof for left nonsingularity is similar. 
\end{proof}

Recall that a ring $A$ is called \emph{von Neumann regular} if for any $a\in A$, there is $b\in A$ such that $aba=a$. If $A$ is a graded ring, then $A$ is called \emph{graded von Neumann regular} if the identity above holds for homogeneous elements. 
It is known that Leavitt path algebras are graded von Neumann regular rings~\cite[Corollary~1.6.17]{hazrat16}.   In contrast we have the following theorem. 

\begin{theorem}\label{vonneumann}
Let $(E,\omega)$ be an LV-graph and $R$ a domain. Then $L_R(E,\omega)$ is not \ep{graded} von Neumann regular.
\end{theorem}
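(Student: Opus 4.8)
The plan is to produce a single homogeneous element $a$ for which no $b$ satisfies $aba=a$; since such an $a$ is homogeneous, this already shows $L_R(E,\omega)$ is not graded von Neumann regular (and a fortiori not von Neumann regular). The entire argument rests on the local valuation $\nu$ from Proposition~\ref{thm4}, which exists precisely because $R$ is a domain and $(E,\omega)$ is an LV-graph.

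First I would take $a=\alpha_1$ for an arbitrary $\alpha\in E^{\st}$, which exists since an LV-graph has edges. This $a$ is homogeneous of degree $e_1$, and it is nonzero because $\alpha_1$ is itself a normal generalised path and hence a basis element by Theorem~\ref{thm11}. Moreover $\alpha_1$ is already normal of length $1$, so $\supp(a)=\{\alpha_1\}$ and thus $\nu(a)=1$ directly from the definition of $\nu$.

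The key computation is to show $\nu(aba)\neq 1$ for every $b\in L_R(E,\omega)$. Writing $u:=s(\alpha)$ and $v:=r(\alpha)$, the relations of Definition~\ref{def3} give $a=ua=av$, so for any $b$ we may absorb the vertices and replace $b$ by $b'':=vbu$ without changing the triple product: $aba=(av)\,b\,(ua)=a\,b''\,a$. Since $b''\in vL_R(E,\omega)$ while $a\in L_R(E,\omega)v$, property~(\ref{4}) of Definition~\ref{def2} applied at the vertex $v$ gives $\nu(ab'')=\nu(a)+\nu(b'')=1+\nu(b'')$. Likewise $ab''=(avb)u\in L_R(E,\omega)u$ and $a\in uL_R(E,\omega)$, so applying property~(\ref{4}) a second time, now at the vertex $u$, yields $\nu(aba)=\nu(ab'')+\nu(a)=2+\nu(b'')$.

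Finally I would read off the contradiction. If $b''=0$ then $aba=0\neq a$; if $b''\neq 0$ then $\nu(b'')\geq 0$, so $\nu(aba)=2+\nu(b'')\geq 2>1=\nu(a)$, forcing $aba\neq a$. In either case no $b$ inverts $a$ in the required sense, so $a$ is a homogeneous non-regular element and the theorem follows. The only delicate point, and the one I expect to be the main obstacle, is the bookkeeping that legitimises applying property~(\ref{4}) twice: the local valuation is multiplicative only for factors localised at a common vertex, so one must first rewrite $b$ as $vbu$ and then carefully identify that the two multiplications take place at the vertices $v$ and $u$ respectively.
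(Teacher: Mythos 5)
Your proposal is correct and takes essentially the same approach as the paper: both single out the homogeneous element $\alpha_1$, absorb the vertices to replace the middle factor $b$ by $vbu$, and apply condition (4) of the local valuation of Definition~\ref{def2} twice (at $v$ and then at $u$) to conclude $\nu(\alpha_1 b\alpha_1)=2\nu(\alpha_1)+\nu(vbu)>\nu(\alpha_1)$, which rules out $\alpha_1 b\alpha_1=\alpha_1$. The only cosmetic difference is that the paper argues by contradiction from an assumed identity $\alpha_1 a\alpha_1=\alpha_1$ (noting $vau\neq 0$ as otherwise the product vanishes), whereas you verify directly that $\nu(aba)\neq\nu(a)$ for every $b$, treating the case $vbu=0$ separately.
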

\begin{proof}
Choose an $\alpha\in E^{\st}$. Assume that there is an $a\in L_R(E,\omega)$ such that $\alpha_1 a\alpha_1=\alpha_1$. Set $u:=s(\alpha)$ and $v:=r(\alpha)$. Then clearly $vau\neq 0$ (otherwise $\alpha_1 a\alpha_1=0$). It follows that
\begin{align*}
&\nu(\alpha_1a\alpha_1)\\
=&\nu((\alpha_1v)a(u\alpha_1))\\
=&\nu(\alpha_1(vau)\alpha_1))\\
\overset{(\ref{4})}{=}&\nu(\alpha_1)+\nu(vau)+\nu(\alpha_1)\overset{(\ref{1}),(\ref{2})}{>}\nu(\alpha_1).
\end{align*}
Since this is a contradiction, $L_R(E,\omega)$ is not (graded) von Neumann regular.
\end{proof}

\begin{lemma}\label{freg}
Let $(E,\omega)$ be an LV-graph and $R$ a domain. If $J$ is a nonzero ideal of $L_R(E,\omega)$, then for any $n\in\N$ and $u,v\in E^0$ there is an $a\in J\cap u L_R(E,\omega)v$ such that $\nu(a)> n$.
\end{lemma}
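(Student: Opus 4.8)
The plan is to manufacture, inside $J\cap uL_R(E,\omega)v$, a single product that starts at $u$, ends at $v$, passes through a nonzero element of $J$, and contains an arbitrarily long block of positive value; the whole point is that the local valuation $\nu$ of Proposition~\ref{thm4} is \emph{multiplicative} across matching vertices (property~(\ref{4})), so a product whose consecutive factors are vertex-aligned cannot collapse and its value is literally the sum of the values of the pieces.

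Write $L:=L_R(E,\omega)$ and let $\nu$ be the local valuation of Proposition~\ref{thm4}. First I would extract a seed from the ideal: since $J\neq\{0\}$, pick $0\neq c\in J$ and, using the local units, choose $p,q\in E^0$ with $pcq\neq 0$; then $pcq\in J\cap pLq$ and $\nu(pcq)\geq 0$ by (\ref{1}). Next I would build arbitrarily long loops of positive value. Since $E$ has edges, fix $\gamma\in E^{\st}$ and set $w:=s(\gamma)$, a non-sink. By \eqref{LV} the vertex $w$ emits at least two structured edges of maximal weight, so $|s^{-1}(w)|\geq 2$ and I may choose $\beta\in s^{-1}(w)$ with $\beta\neq\alpha^w$. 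Then $\beta_1\beta_1^*$ is a \emph{normal} generalised path lying in $wLw$ (it is neither of type I, as $\beta\neq\alpha^w$, nor of type II), and repeated use of (\ref{4}), starting from $\nu(\beta_1)=\nu(\beta_1^*)=1$, gives $\nu\big((\beta_1\beta_1^*)^k\big)=2k$ for every $k\geq 1$. This is exactly where the LV-hypothesis is used: it guarantees a non-special edge at $w$, hence a length-two element of $wLw$ whose powers have unbounded value.

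To splice these pieces together I would invoke connectedness. Because $\geq_g$ links any two vertices (Definition~\ref{defn00}), there are generalised paths $o_1\colon u\to p$, $o_2\colon q\to w$ and $o_3\colon w\to v$; replacing each letter $\gamma_i$ by $\gamma_1$ (the source and range of $\gamma_i$ are index-independent) I may take each $o_t$ to be a product of index-$1$ letters, whence iterating (\ref{4}) gives $\nu(o_t)=|o_t|\geq 0$, and in particular $o_t\neq 0$. Given $n$, choose $k$ with $2k>n$ and set
\[
a:=o_1\,(pcq)\,o_2\,(\beta_1\beta_1^*)^k\,o_3 .
\]
Then $a\in J$ because $pcq\in J$ and $J$ is an ideal, and $a\in uLv$ since the leftmost factor lies in $uL$ and the rightmost in $Lv$. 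The ranges and sources of consecutive factors agree (they are $p,q,w,w$), so iterating property~(\ref{4}) yields
\[
\nu(a)=\nu(o_1)+\nu(pcq)+\nu(o_2)+2k+\nu(o_3)\geq 2k>n ,
\]
and in particular $a\neq 0$ by (\ref{1}); this $a$ is the required element.

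The only genuine obstacle is the danger that such a product vanishes or drops in length under the defining relations — precisely the phenomenon that derails naive length estimates in these algebras. That danger is neutralised by the multiplicativity clause~(\ref{4}) of the local valuation, \emph{provided} every factor is vertex-aligned with its neighbour. Consequently the real content of the argument is the bookkeeping that makes the chain of sources and ranges $u\to p\to q\to w\to w\to v$ match up, together with the LV-driven construction of the unbounded loop $(\beta_1\beta_1^*)^k$; everything else is routine.
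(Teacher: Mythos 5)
Your proof is correct and takes essentially the same route as the paper's: compress a nonzero element of $J$ between two vertices, splice it via generalised paths (using connectedness of the LV-graph) into $uL_R(E,\omega)v$, and pump the value using the multiplicativity property (4) of Definition~\ref{def2}. The only cosmetic difference is that the paper makes the two connecting generalised paths themselves have length greater than $n$ (lengthening them edge-by-edge, which is possible since every vertex emits or receives a structured edge), whereas you keep the connectors short and insert a separate long loop $(\beta_1\beta_1^*)^k$ at a fixed non-sink.
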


\begin{proof}
Let $J$ be a nonzero ideal of $L_R(E,\omega)$, $n\in\N$ and $u,v\in E^0$. Choose a nonzero element $a'\in J$. Then there are $z_1,z_2\in E^0$ such that $z_1a'z_2\neq 0$. Now it easy to show that there are generalised paths $p$ and $q$ of length $>n$ such that $s(p)=u$, $r(p)=z_1$, $s(q)=z_2$, $r(q)=v$ (note that any vertex must emit or receive an structured edge since $(E,\omega)$ is an LV-graph). Clearly $\nu(p)=|p|,\nu(q)=|q|>n$. Set $a:=pa'q\in J\cap uL_R(E,\omega)v$. Then $\nu(a)=\nu(pa'q)\overset{(\ref{4})}{=}\nu(p)+\nu(z_1a'z_2)+\nu(q)>n$.
\end{proof}

\begin{theorem}
Let $(E,\omega)$ be an LV-graph and $R$ a domain. Then the Jacobson radical of $L_R(E,\omega)$ is zero.
\end{theorem}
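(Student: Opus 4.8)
The plan is to assume the Jacobson radical $J := J(L_R(E,\omega))$ is nonzero and derive a contradiction from the local valuation $\nu$ of Proposition~\ref{thm4} together with Lemma~\ref{freg}. Write $L := L_R(E,\omega)$ and fix any vertex $v \in E^0$ (such a vertex exists since an LV-graph satisfies $E^{\st}\neq\emptyset$). First I would apply Lemma~\ref{freg} to the nonzero ideal $J$ with $u = v$ and $n = 1$: this produces an element $b \in J \cap vLv$ with $\nu(b) > 1$, so in particular $b \neq 0$ and $\nu(b) \geq 2 > 0$. The point of routing through Lemma~\ref{freg} rather than an arbitrary nonzero element of $J$ is precisely to land such an element of strictly positive valuation inside a single corner.

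Next I would record two facts about the corner ring $vLv$, which is unital with identity $v$. The valuation $\nu$ restricts to an honest (multiplicative) valuation on $vLv$: for $a, b \in vLv$ one has $a \in Lv$ and $b \in vL$, so condition~(\ref{4}) of Definition~\ref{def2} gives $\nu(ab) = \nu(a) + \nu(b)$, while (\ref{1})--(\ref{3}) are inherited from $L$. Moreover, since $v$ is an idempotent and $L$ has local units, the standard corner-ring description of the Jacobson radical gives $J(vLv) = vJ(L)v = J \cap vLv$; hence $b \in J(vLv)$. Because $vLv$ is unital and $b$ lies in its radical, $v - b$ is invertible in $vLv$; let $c \in vLv$ be its inverse, so $(v-b)c = v$.

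Finally I would extract the contradiction by evaluating $\nu$ on the identity $v = (v-b)c$. Note $\nu(v) = 0$ by condition~(\ref{2}) and $\nu(c) \geq 0$ since $c \neq 0$. Since $\nu(-b) = \nu(b)$ (the normal form of $-b$ has the same support as that of $b$) and $\nu(b) \geq 2 \neq 0 = \nu(v)$, the ultrametric inequality~(\ref{3}) must be an equality on unequal values, so $\nu(v-b) = \max\{\nu(v),\nu(b)\} = \nu(b) \geq 2$. Multiplicativity on $vLv$ then gives
\[
0 = \nu(v) = \nu\big((v-b)c\big) = \nu(v-b) + \nu(c) \geq 2 + 0 = 2,
\]
a contradiction. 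Hence $J = 0$ and $L_R(E,\omega)$ is semiprimitive.

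The routine verifications are that $\nu$ restricts to a valuation on the corner and that $\nu(-b)=\nu(b)$, and the one general fact I would invoke off the shelf is the corner identity $J(vLv)=vJ(L)v$, which is what promotes the large-valuation element from Lemma~\ref{freg} to a quasi-regular element of the unital ring $vLv$. I expect the main obstacle to be conceptual rather than computational: the valuation argument only bites when $b$ has strictly positive valuation, since a scalar multiple of $v$ (valuation $0$) would yield no contradiction, so the whole argument hinges on first concentrating an element of arbitrarily large valuation inside a single corner before using invertibility of $v-b$.
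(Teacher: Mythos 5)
Your proof is correct, and its skeleton is the same as the paper's: use Lemma~\ref{freg} to concentrate an element of strictly positive valuation in a corner $vL_R(E,\omega)v$, then use the local valuation of Proposition~\ref{thm4} to contradict quasi-regularity. The two arguments diverge in how quasi-regularity is exploited. The paper stays hands-on: for $a\in J\cap vLv$ with $\nu(a)>0$ it takes $b$ with $b+a=ba$, multiplies by $v$ on both sides to assume $b\in vLv$, and then conditions (\ref{3}) and (\ref{4}) of Definition~\ref{def2} force $\nu(b)=0$, whence $b=\lambda v$ by (\ref{2}) and the equation collapses to $\lambda v=(\lambda-1)a$, impossible on valuation grounds. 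You instead invoke two off-the-shelf facts: the corner identity $J(vLv)=vJ(L)v$ (of which only the easy inclusion $vJ(L)v\subseteq J(vLv)$ is needed, and its proof is exactly the paper's cornering step of multiplying the quasi-regularity equation by $v$ on both sides) and invertibility of $v-b$ in the unital ring $vLv$; the contradiction $0=\nu(v)=\nu(v-b)+\nu(c)\geq 2$ is then immediate. Your one extra verification, the equality case $\nu(v-b)=\max\{\nu(v),\nu(b)\}$ for unequal values, does hold: $\NF$ is $R$-bilinear, so $\nu(-b)=\nu(b)$, and the standard ultrametric argument goes through. In short, your route outsources the ring theory to standard facts about radicals of corners and makes the valuation step a one-liner, while the paper's is self-contained, needing neither the corner-radical identity nor the equality case of (\ref{3}); both are sound.
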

\begin{proof}
Suppose the Jacobson radical $J$ of $L_R(E,\omega)$ is not zero. Choose a $v\in E^0$. Then, by Lemma~\ref{freg}, there is an $a\in J\cap v L_R(E,\omega)v$ such that $\nu(a)>0$. Since $a\in J$, $a$ is left quasi-regular, i.e., there is a $b\in L_R(E,\omega)$ such that $b+a=ba$. By multiplying $v$ from the right and from the left one gets $vbv+a=vbva$. Hence we may assume that $b\in vL_R(E,\omega)v$. It follows that
\[\max\{\nu(b),\nu(a)\}\overset{(\ref{3})}{\geq} \nu(b+a)=\nu(ba)\overset{(\ref{4})}{=}\nu(b)+\nu(a).\]
This implies that $\nu(b)=0$ and hence, by (\ref{2}), $b=\lambda v$ for some $\lambda\in R\setminus\{0\}$. It follows that $\lambda v=b=ba-a=(\lambda v)a-a=\lambda a-a=(\lambda-1)a$. But this is a contradiction since $\nu(\lambda v)=0$ but either $\nu((\lambda-1)a)=-\infty$, if $\lambda=1$, or $\nu((\lambda-1)a)=\nu(a)>0$, if $\lambda\neq 1$. Thus the Jacobson radical of $L_R(E,\omega)$ is zero.
\end{proof}

\end{document}